\newtheorem{theorem}{Theorem}
\newtheorem{proposition}{Proposition}
\newtheorem{remark}{Remark}
\newenvironment{proof}[1][Proof]{\textbf{#1.} }{\ \rule{0.5em}{0.5em}}
\newcommand{\Var}{\text{Var}}
\newcommand{\Card}{\text{Card}}
\def\ds{\displaystyle}
\begin{document}

\title{Adaptive test for large  covariance matrices with missing observations}

\author{Cristina Butucea$^{1,2}$, Rania Zgheib$^{1,2}$ \\
$^1$ Universit\'e Paris-Est Marne-la-Vall\'ee, \\
LAMA(UMR 8050), UPEM, UPEC, CNRS,
F-77454, Marne-la-Vall\'ee, France\\
$^2$ ENSAE-CREST-GENES \\
3, ave. P. Larousse
92245 MALAKOFF Cedex, FRANCE
} 

\date{}

\maketitle

\begin{abstract}
We observe $n$ independent $p-$dimensional Gaussian vectors with missing coordinates, that is
each value (which is assumed standardized) is observed with probability $a>0$. We investigate the problem of minimax nonparametric testing that the high-dimensional covariance matrix $\Sigma$ of the underlying Gaussian distribution is the identity matrix, using these partially observed vectors. Here, $n$ and $p$ tend to infinity and $a>0$ tends to 0, asymptotically.

We assume that $\Sigma$ belongs to a Sobolev-type ellipsoid with parameter $\alpha >0$. When $\alpha$ is known, we give asymptotically minimax consistent test procedure and find the minimax separation rates $\tilde \varphi_{n,p}=  (a^2n \sqrt{p})^{- \frac{2 \alpha}{4 \alpha +1}}$, under some additional constraints on $n,\, p$ and $a$. We show that, in the particular case of Toeplitz covariance matrices,the minimax separation rates are faster, $\tilde \phi_{n,p}= (a^2n p)^{- \frac{2 \alpha}{4 \alpha +1}}$. We note how the "missingness" parameter $a$ deteriorates the rates
with respect to the case of fully observed vectors ($a=1$).

We also propose adaptive test procedures, that is free of the parameter $\alpha$ in some interval, and show that the loss of rate is $(\ln \ln (a^2 n\sqrt{p}))^{\alpha/(4 \alpha +1)}$ and $(\ln \ln (a^2 n p))^{\alpha/(4 \alpha +1)}$ for Toeplitz covariance matrices, respectively.
\end{abstract}

\noindent {\bf Mathematics Subject Classifications 2000:} 62G10, 62H15 \\

\noindent {\bf Key Words:} adaptive test, covariance matrices, goodness-of-fit tests, minimax separation rate,  missing observation, toeplitz matrices 
\\

\section{Introduction}
Recently,  problems related to high-dimensional data became more popular. In particular, in many areas such as genetics, meteorology and others, the generated  data sets are high-dimensional and incomplete, in the sense that they contain missing values. In this paper we investigate the problem of testing large covariance matrices from a sample of partially observed vectors.

Let $X_1,\dots,X_n$, be $n$ independent and identically distributed $p$-vectors following a  multivariate normal distribution $\mathcal{N}_p(0,\Sigma)$, where $\Sigma=[\sigma_{ij} ]_{ 1\leq i,j \leq p}$ is the normalized covariance matrix, with $\sigma_{ii} = 1$, for all $i=1$ to $p$.
Let us denote $X_k = (X_{k,1},\dots, X_{k,p})^\top$ for all $k=1,\dots,n$. Let $ \{ \varepsilon_{k,j} \}_{ 1 \leq k \leq n , 1 \leq j \leq p }$ be a sequence of i.i.d. Bernoulli random variables with parameter $a \in (0,1)$, $\mathcal{B}(a)$, and independent from $ X_1, \dots, X_n$. We observe $n$ i.i.d. random vectors $Y_1, \dots, Y_n$ such that 
$$
Y_k = ( \varepsilon_{k,1} \cdot X_{k,1}, \dots ,  \varepsilon_{k,p} \cdot X_{k,p} )^\top  \quad \text{ for all }   k=1,\dots,n.
$$
Each component of the vector $Y_k$ is observed with probability equal to $a$ and this is the context of missing observations.
We denote by $P_{a , \Sigma}$ the  probability distribution of the random vector $Y_k$ when $X_k \sim \mathcal{N}_p(0,\Sigma)$ and $\varepsilon_{k,j} \sim \mathcal{B}(a)$. We also denote by $\mathbb{E}_{a,\Sigma}$ and $\Var_{a,\Sigma}$  the expected value and the variance  associated to $P_{a , \Sigma}$.
Given the partially observed vectors $Y_1, \dots, Y_n$,  we want to test the null hypothesis 
\begin{equation}
\label{H0}
H_0 : \Sigma =I
\end{equation}
against a composite alternative hypothesis 
\begin{equation}
\label{H1}
H_1 : \Sigma \in Q(\mathcal{G}(\alpha), \varphi)
\end{equation}
where $\alpha >0$ and $\mathcal{G}(\alpha) $ is either 
\[
\mathcal{F}(\alpha) = \{\Sigma > 0,~ \mbox{symmetric} ;  \ds\frac{1}{p}\ds\sum_{1 \leq i<j \leq p} \sigma_{ij}^2 |i-j|^{2 \alpha}
     \leq 1 \text{ for all $p \geq 1$ and }   \sigma_{ii} = 1 ~\text{for all}~ 1 \leq i \leq p \}
\]
in the general case or 
\[
\mathcal{T}(\alpha) = \{ \Sigma >0,~ \mbox{symmetric}  , \Sigma \text{ is Toeplitz } ; \ds\sum_{j \geq 1} \sigma_j^2 j^{2\alpha} \leq 1 \text{  and } \sigma_0 = 1 \} 
\]
for the case of Toeplitz matrices. Thus, we define the following $\ell_2$ ellipsoids with $\ell_2$ balls removed:
\begin{equation} 
\label{classe generale}
Q(\mathcal{F}(\alpha), \varphi) = \Big\{ \Sigma \in \mathcal{F}(\alpha) \quad \text{ such that } \ds\frac 1{p} \sum_{ 1 \leq i < j \leq p} \sigma_{ij}^2  \geq \varphi^2\Big\}
\end{equation}
and 
\begin{equation}
\label{classe toeplitz}
Q(\mathcal{T}(\alpha), \varphi) = \Big\{ \Sigma \in \mathcal{T}(\alpha) \quad \text{ such that }  \sum_{  j \geq 1} \sigma_{j}^2  \geq \varphi^2 \Big\}
\end{equation}
Typically, the test procedures depend on the parameter $\alpha$ and  it is therefore useful to construct a test procedure that is adaptive to  $\alpha $  in some interval. Here we propose minimax and adaptive procedures for testing in the context of missing observations. 

The problem of estimating a covariance matrix of partially observed vectors was investigated several times in the literature. The simplest method to deal with missing data is to ignore the missing values and restrict the study to a subset of fully observed variables. This  method is not always reliable mainly when the number of missing values is relatively high. Hence, in order to treat this problem, methods based 
 on filling in the missing values were developed, in particular the Expectation-Maximization(EM) algorithm see \cite{Schneider01}. Recently, \citep{Lounici14} proposed an estimating procedure that does not need imputation of the missing values. Instead, the setup with missing values is treated as an inverse problem. We will also follow this approach for the test problem.

The problem of testing large covariance matrices  was considered only in the case of complete data. Out of the large amount of results in the literature on this latter  problem, we mention only the most related papers where procedures to test the null hypothesis $H_0 $  in \eqref{H0} are derived. We refer to  \citep{Bai2009} , \cite{JiangJiangYang12} and \cite{WangCaoMiao13}, where test procedures based  on the likelihood ratio  are proposed, and to \cite{LedoitWolf02}, \cite{Srivastava2005}, \cite{ChenZZ10} and \cite{CaiMa13},  where test statistics   based on  the quadratic loss function $tr(\Sigma-I)^2$ are used. Note that in \cite{ButuceaZgheib2014A} and \cite{ButuceaZgheib15}  asymptotically  consistent  test procedures where given in order to test \eqref{H0} against \eqref{H1}, when the covariance matrices belongs to \eqref{classe generale} and to \eqref{classe toeplitz}, respectively. They describe the minimax and sharp minimax separation rates. Here,  we give the minimax separation rates when assuming that we have partially observed vectors. We describe how the "missingness" parameter $a$ deteriorates the minimax rates in this context. Moreover we develop  consistent test procedures free of the class parameter $\alpha$, via an aggregation procedure of tests.

Missing observations appeared  recently in random matrix theory, see \cite{JurczakRohde15}. They show that the sequence of the spectral measures of  sample  covariance matrices with missing observations converge weakly to a sequence of  non random measures. Also they studied the limits of the extremes eigenvalues in the same context.

In this paper, we  describe the  minimax separation rate for testing $H_0$ given in \eqref{H0} against the composite alternative  $H_1$ in \eqref{H1}, when the data contains missing values. 
For a test procedure $\Delta$ we define 
the type I error probability by $\eta(\Delta)= P_I( \Delta =1)$, 
the maximal type~II error probability by $ \beta( \Delta, Q(\mathcal{G}(\alpha), \varphi))= \sup\limits_{\Sigma \in Q(\mathcal{G}(\alpha), \varphi)} P_\Sigma(\Delta =0) $ and the total error probability by
\[
\gamma(\Delta, Q(\mathcal{G}(\alpha), \varphi)) = \eta(\Delta) + \beta( \Delta, Q(\mathcal{G}(\alpha), \varphi)).
\]
Moreover, we define the minimax total error probability over the class $Q(\mathcal{G}(\alpha), \varphi)$ by
\[
\gamma :=  \inf\limits_\Delta \gamma(\Delta, Q(\mathcal{G}(\alpha), \varphi))
\]
where the infimum is taken over all  possible test procedures.
We define the minimax separation rate $\widetilde\varphi_\alpha$. On the one hand, we construct  a test procedure $\Lambda$ and derive  the conditions  on  $\varphi$ for which   $\gamma(\Lambda, Q(\alpha, \varphi)) \to 0 $. The test $\Lambda$ will be called asymptotically minimax consistent. On the other hand we give the conditions on $\varphi$  for which  $\gamma \to 1$. The previous conditions together allow us to determinate the minimax separations rate $\widetilde\varphi_\alpha$, such that there exists the test $\Lambda$ with
\[
\gamma(\Lambda, Q(\mathcal{G}(\alpha), \varphi)) \to 0 \quad \text{ if } \ds\frac{\varphi}{\widetilde\varphi_\alpha} \to + \infty,
\]
and
\[
\gamma =  \inf\limits_\Delta \gamma(\Delta, Q(\mathcal{G}(\alpha), \varphi)) \to 1 \quad \text{ if } \ds\frac{\varphi}{\widetilde\varphi_\alpha} \to 0.
\]
In other words, when $\varphi >> \widetilde\varphi_\alpha$ there exists an asymptotically minimax consistent test procedure and when $\varphi << \widetilde\varphi_\alpha$, there is no asymptotically consistent test procedure which can  distinguish between the null and the alternative hypothesis.

We also consider the problem of  adaptation with respect to the parameter $\alpha$. To treat this problem we first  assume that $\alpha \in A$, for $A$ an interval, and define  a larger class of matrices under the alternative than \eqref{H1}. The testing problem we are interested in now, is to test $H_0$ in \eqref{H0} against
\[
H_1 : \Sigma \in \underset{\alpha \in A}{ \cup} Q( \mathcal{F}(\alpha), \mathcal{C} \psi_\alpha) \, ,
\]
where $\psi_\alpha = \rho_{n,p}/ \widetilde\varphi_\alpha$, and $ \widetilde\varphi_\alpha $ is the minimax separation rate of testing $H_0$ given in \eqref{H0} against $H_1$ in \eqref{H1} for a known $\alpha$.   
Our aim is to  construct  a test procedure $\Delta_{ad}$ and to find the loss $\rho_{n,p}$  such that for a large enough constant $\mathcal{C}>0$:
\[
\gamma(\Delta_{ad},  \underset{\alpha \in A}{ \cup} Q( \mathcal{F}(\alpha), \mathcal{C} \psi_\alpha)) \to 0 .
\]
In this case we say that $\Delta_{ad}$ is an  asymptotically adaptive consistent test procedure.

The paper is structured as follows: in section~\ref{sec:general} we solve the case of general covariance matrices in $\mathcal{F}(\alpha)$ and in section~\ref{sec: Toeplitz matrices} the particular case of Toeplitz covariance matrices in $\mathcal{T}(\alpha)$. In section \ref{sec : separation rate general case}, we study the test problem with alternative hypothesis $Q(\mathcal{F}(\alpha), \varphi)$. We construct an asymptotically minimax consistent test procedure based on the data with missing observations and show that the minimax separation rate is
$$
\tilde \varphi_\alpha(\mathcal{F}) = (a^2n \sqrt{p})^{- \frac{2 \alpha}{4 \alpha +1}}.
$$   
In section~\ref{sec: adaptivity general case}, we propose a test procedure adaptive to the unknown parameter $\alpha$.
In section~\ref{sec: Toeplitz matrices}, we study the problem  with alternative hypothesis $Q(\mathcal{T}(\alpha), \varphi)$ and derive analogous results. The minimax separation rate is
$$
\tilde \phi_\alpha(\mathcal{T}) = (a^2 n p)^{- \frac{2 \alpha}{4 \alpha +1}}.
$$
We can view the vectors $X_k$ in this case as a sample of size $p$ from a stationary Gaussian process. However, due to the missing data, this is not true anymore for vectors $Y_k$. Minimax and adaptive rates of testing are faster by a factor $\sqrt{p}$
over classes $\mathcal{T}(\alpha)$ than over the classes $\mathcal{F}(\alpha)$. Note that the adaptive procedure attains the rates $(\sqrt{\ln \ln (a^2 n\sqrt{p})}/(a^2 n\sqrt{p}))^{2\alpha/(4 \alpha +1)}$ and $(\sqrt{\ln \ln (a^2 n p)}/(a^2 np) )^{2 \alpha / (4 \alpha +1)}$, respectively. However, the parameter $a$ describing the probability of a missing coordinate appears similarly in both cases. It actually deteriorates the rates with respect to the case $a=1$ of fully observed data. Proofs are given in section~\ref{sec:proofs}.
 
Note that, for the rest of the paper asymptotics will be taken when $n \to + \infty$, $p \to + \infty$ and $a$ is either fix or tends to 0 under further constraints.


\section{Test for covariance matrices} \label{sec:general}

 We want  to test from the data with missing coordinates $Y_1, \dots, Y_n$ the null hypothesis \eqref{H0} against the   alternative \eqref{H1} that we recall here:
\begin{equation*}
\label{H1simple}
  H_1 : \Sigma \in Q(\mathcal{F}(\alpha) , \varphi)
\end{equation*}
 where $Q(\mathcal{F}(\alpha) , \varphi)$ is given in \eqref{classe generale}.
This testing problem is treated in  \cite{ButuceaZgheib2014A}, for the case of fully observed data, which correspond to  $a=1$ in our case. For the sake of clarity, let us recall that in  \cite{ButuceaZgheib2014A}, the following weighted U-statistic was studied
$$
\widehat{\mathcal{D}}_{n,p} = \ds\frac{1}{n(n-1)p}   \ds\sum_{1 \leq k \ne l \leq n} \ds\sum_{ \substack{1 \leq i<j \leq p \\ |i-j| < m }} w_{ij} X_{k,i} X_{k,j} X_{l,i} X_{l,j} .
$$  
The test based on $\widehat{\mathcal{D}}_{n,p}$ was shown to achieve minimax and sharp minimax separation rates, i.e. asymptotic equivalents of the type II error and the total error probabilities are also given when $\varphi \asymp \tilde \varphi_\alpha$. The weights $\{w_{ij}\}_{1 \leq i < j \leq p}$ depend on the parameter $\alpha$ and are  chosen as solution of the following  optimization problem:
\begin{eqnarray}
\label{probextremal}
  \sup\limits_{ \{ w_{ij}\geq 0; \sum_{i<j}w_{ij}^2 = \frac 12 \}} \inf\limits_{\Sigma \in Q(\mathcal{F}(\alpha), \varphi)}  \mathbb{E}_\Sigma(\widehat{\mathcal{D}}_{n,p}) &=&   \sup\limits_{ \{ w_{ij}\geq 0; \sum_{i<j}w_{ij}^2 = \frac 12 \}}\inf\limits_{\Sigma \in Q(\mathcal{F}(\alpha), \varphi)}   \ds\frac 1p \sum_{i<j} w_{ij} \sigma^2_{ij} \nonumber \\
  & = & \inf\limits_{\Sigma \in Q(\mathcal{F}(\alpha), \varphi)}  \ds\frac 1p  \ds\sum_{i<j}\sigma_{ij}^4 =: b(\varphi),
\end{eqnarray}
where $b(\varphi) \sim C^{\frac 12}(\alpha) \varphi^{2 + \frac{1}{2 \alpha}}$ with $C(\alpha)= (2 \alpha +1)/((4\alpha +1)^{1 + \frac 1{2\alpha}})$, if $\varphi \to 0$ such that $p \varphi^{1/ \alpha} \to + \infty$.

In the next section we introduce a simpler U-statistic for the case of partially observed vectors and give the asymptotic minimax separation rates, then we aggregate these tests in order to construct a procedure free of the parameter $\alpha$.

\subsection{ Test procedure and minimax separation rate}
\label{sec : separation rate general case} 
Let us introduce the asymptotically minimax consistent test procedure  with simpler form than $\widehat{\mathcal{D}}_{n,p}$ defined above.
For an integer $ m \in \mathbb{N}$ large enough, such that it verifies
\begin{equation}
\label{propriete de m}
 D \leq m^{\alpha} \cdot \varphi  \leq K ^{-2 \alpha } \text{ for some constants } D>1 \,\,\,  \text{ and }  K >0,
\end{equation}
we define the following test statistic
\begin{equation}
\label{teststat}
\widehat{\mathcal{D}}_{n,p,m} = \ds\frac{1}{n(n-1)p} \cdot \frac{1}{ \ds\sqrt{2m} }  \ds\sum_{1 \leq k \ne l \leq n} \ds\sum_{ \substack{1 \leq i<j \leq p \\ |i-j| < m }}  Y_{k,i} Y_{k,j} Y_{l,i} Y_{l,j} \, .
\end{equation}
Note that, as in \citep{ButuceaZgheib2014A} we only use $m$ diagonals of the sample covariance matrix $\bar{Y} \bar{Y}^\top$, but the weights are constant and equal to $1/ \ds\sqrt{2m}$.
\begin{proposition}
\label{prop: statistic properties}
Under the null hypothesis, the test statistic $\widehat{\mathcal{D}}_{n,p,m} $  in \eqref{teststat} is a centered random variable with variance $\Var_{a,I}(\widehat{\mathcal{D}}_{n,p,m} ) = a^4 / (n(n-1)p)$.
Moreover, 
\[
\ds\frac{n \ds\sqrt{p}}{a^2} \cdot \widehat{\mathcal{D}}_{n,p, m}  \to \mathcal{N}(0,1) \quad  \text{ under $P_I$- probability}
\]
Under the alternative hypothesis, for all $\Sigma \in \mathcal{F}(\alpha)$ with $\alpha >1/2$,
\[
\mathbb{E}_{a,\Sigma}(\widehat{\mathcal{D}}_{n,p,T}) = \frac{a^4}{p \cdot \ds\sqrt{2m}} \ds\sum_{ \substack{1 \leq i<j \leq p \\ |i-j| \leq  m}}  \sigma_{ij}^2  \quad \text{and} \quad \Var_{a,\Sigma} = \ds\frac{T_1}{n(n-1)p^2}+ \frac{T_2}{np^2}
\]
where, for  $ m  \to + \infty$ such that $m/p \to 0$ and that \eqref{propriete de m} holds, 
\begin{eqnarray*}
T_1 & \leq &  p \cdot a^4 ( 1 + o(1)) +   p  \cdot \mathbb{E}_{a,\Sigma}(\widehat{\mathcal{D}}_{n, p,m})  \cdot O( a^2 m\ds\sqrt{m}) , \\
T_2 & \leq &  p  \cdot  \mathbb{E}_{a,\Sigma}(\widehat{\mathcal{D}}_{n, p,m}) \cdot O(a^2 \ds\sqrt{m})  +  p^{3/2} \Big(  \mathbb{E}^{3/2}_{a,\Sigma}(\widehat{\mathcal{D}}_{n, p,m})O(a^2 m^{3/4}) +   \mathbb{E}_{a,\Sigma}(\widehat{\mathcal{D}}_{n, p,m}) \cdot o(a^4) \Big) .
\end{eqnarray*}
\end{proposition}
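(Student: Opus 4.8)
The plan is to treat $\widehat{\mathcal{D}}_{n,p,m}$ as a $U$-statistic of order two in the i.i.d.\ vectors $Y_1,\dots,Y_n$ and to reduce every moment computation to products of Gaussian moments (via Wick's formula) and Bernoulli moments of the masks $\varepsilon_{k,j}$. Writing $Z_{k,ij}=Y_{k,i}Y_{k,j}=\varepsilon_{k,i}\varepsilon_{k,j}X_{k,i}X_{k,j}$ and $W_{kl}=\sum_{(i,j)\in S}Z_{k,ij}Z_{l,ij}$, where $S=\{(i,j):1\le i<j\le p,\ |i-j|<m\}$ has $\Card(S)\sim pm$, the statistic is $\widehat{\mathcal{D}}_{n,p,m}=(n(n-1)p\sqrt{2m})^{-1}\sum_{k\ne l}W_{kl}$. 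Independence of $\{\varepsilon_{k,j}\}$ from $\{X_{k,j}\}$ and the identity $\varepsilon^2=\varepsilon$ reduce each $\mathbb{E}_{a,\Sigma}[Z_{k,ij}]=a^2\sigma_{ij}$ (for $i\ne j$) and every higher moment to an $a$-weighted Isserlis sum.

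First I would establish the null behaviour. Since $\sigma_{ij}=0$ for $i\ne j$ under $H_0$, one has $\mathbb{E}_{a,I}[Z_{k,ij}]=0$, so the $U$-statistic is centred and \emph{completely degenerate}: the first Hoeffding projection $\mathbb{E}_{a,I}[W_{12}\mid Y_1]=\sum_{(i,j)}Z_{1,ij}\,\mathbb{E}_{a,I}[Z_{2,ij}]$ vanishes. Degeneracy makes the cross-covariances $\mathbb{E}_{a,I}[W_{kl}W_{k'l'}]$ vanish unless $\{k,l\}=\{k',l'\}$, so $\Var_{a,I}(\sum_{k\ne l}W_{kl})=2n(n-1)\,\mathbb{E}_{a,I}[W_{12}^2]$. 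Under $H_0$ the Gaussian indices must pair exactly, forcing $(i,j)=(i',j')$, whence $\mathbb{E}_{a,I}[W_{12}^2]=a^4\Card(S)\sim a^4pm$; dividing by $(n(n-1)p)^2(2m)$ yields the announced variance $a^4/(n(n-1)p)$. For the asymptotic normality I would invoke a central limit theorem for completely degenerate $U$-statistics (de Jong's theorem, equivalently a martingale CLT applied to $D_l=\sum_{k<l}W_{kl}$, which is a martingale difference array by degeneracy); the only nontrivial hypothesis is the negligibility of $\mathbb{E}_{a,I}[W_{12}^4]/(\mathbb{E}_{a,I}[W_{12}^2])^2$ and of the diagonal fourth-moment term, which follows from $\Card(S)\to\infty$ (no single diagonal dominates) and $m/p\to0$ after another Wick-plus-Bernoulli count.

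Under the alternative the expectation is immediate: independence across $k\ne l$ and $\mathbb{E}_{a,\Sigma}[Z_{k,ij}]=a^2\sigma_{ij}$ give $\mathbb{E}_{a,\Sigma}(\widehat{\mathcal{D}}_{n,p,m})=\frac{a^4}{p\sqrt{2m}}\sum_{(i,j)\in S}\sigma_{ij}^2$. For the variance I would run the full Hoeffding decomposition $W_{kl}=\mathbb{E}_{a,\Sigma}[W_{12}]+g_1(Y_k)+g_1(Y_l)+g_2(Y_k,Y_l)$, with $g_1(Y_1)=\sum_{(i,j)}a^2\sigma_{ij}(Z_{1,ij}-a^2\sigma_{ij})$ the now nonvanishing linear part. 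Orthogonality of the three components gives
\[
\Var_{a,\Sigma}(\widehat{\mathcal{D}}_{n,p,m})=\frac{\mathbb{E}_{a,\Sigma}[g_2^2]/m}{n(n-1)p^2}+\frac{2\,\Var_{a,\Sigma}(g_1)/m}{np^2},
\]
so that the announced terms are identified as $T_1=\mathbb{E}_{a,\Sigma}[g_2^2]/m$ and $T_2=2\,\Var_{a,\Sigma}(g_1)/m$, and it remains only to bound these two quantities.

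The main obstacle is precisely this last step. Both $\mathbb{E}_{a,\Sigma}[g_2^2]$ and $\Var_{a,\Sigma}(g_1)$ expand into sums over pairs (and, for $\Var(g_1)$, quadruples) of diagonal indices of covariances $\Cov(Z_{1,ij},Z_{1,i'j'})$; by Wick's formula together with the Bernoulli moments these reduce to products of entries $\sigma_{\cdot\cdot}$ summed over configurations determined by how $\{i,j\}$ and $\{i',j'\}$ overlap (disjoint, one common index, or equal), each carrying a power of $a$ equal to the number of distinct masks involved. The work is to show that the leading diagonal configuration reproduces the $p\cdot a^4(1+o(1))$ term of $T_1$ and that the remaining overlapping configurations, summed against the constraint $\frac1p\sum_{i<j}\sigma_{ij}^2|i-j|^{2\alpha}\le1$, contribute exactly the $p\,\mathbb{E}_{a,\Sigma}(\widehat{\mathcal{D}}_{n,p,m})\cdot O(a^2m\sqrt m)$ term, and analogously for the $O(a^2\sqrt m)$, $O(a^2m^{3/4})$ and $o(a^4)$ contributions to $T_2$. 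The hypotheses enter exactly here: $\alpha>1/2$ makes the row sums $\sum_j\sigma_{ij}^2$ and the mixed sums appearing in the six- and eight-fold Gaussian moments summable and uniformly bounded, while $m/p\to0$ controls the number of overlapping triples of diagonals. I expect the combinatorial classification of overlap patterns, and the attribution of the correct powers of $a$ and $m$ to each, to be the delicate part; the algebra itself is a lengthy but routine application of Isserlis' theorem.
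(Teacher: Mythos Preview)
Your proposal is correct and matches the paper's approach. The paper's own proof is a one-line reference (``follows the same steps as the proof of Proposition~1 of \cite{ButuceaZgheib2014A}; we use repeatedly the independence of $(\varepsilon_{k,i})_{k,i}$ and $(X_{k,.})_k$ and obvious properties of the Bernoulli random variables''), and the detailed $U$-statistic/Hoeffding-decomposition/Isserlis/martingale-CLT scheme you lay out is precisely what that reference contains, with the Bernoulli masks producing the extra powers of $a$ exactly as you describe.
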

\begin{proof}[Proof of Proposition \ref{prop: statistic properties}]
The proof follows the same steps as the proof of Proposition 1 of \cite{ButuceaZgheib2014A}. We use repeatedly the independence of  $( \varepsilon_{k,i})_{k,i}$ and $(X_{k,.})_k$ and obvious properties of the Bernoulli random variables.
\hfill \end{proof}

Now, we propose the following test procedure
\begin{equation}
\label{testm}
  \Delta_m = \Delta_m(t) = \mathds{1} ( \widehat{\mathcal{D}}_{n,p,m} > t) ,  \quad t>0
 \end{equation}
where $\widehat{\mathcal{D}}_{n,p,m}$ is the test statistic defined in \eqref{teststat}.
\begin{theorem}
\label{theo:upperbound1}
\noindent

Upper bound: let $m \to + \infty$ such that $ m/p \to 0$ and that \eqref{propriete de m} holds. If $\alpha > 1/2$ and if
\[
\varphi \to 0 \,  \text{ and } a^2 n \sqrt{p} \, \varphi^{2 + \frac 1{2 \alpha} } \to + \infty
\]
the test procedure defined in \eqref{testm} with $t \leq c \cdot a^4 \varphi^{2 + \frac 1{2 \alpha}}$ for some constant $c <  K(1 - D^{-2})/ \ds\sqrt{2}$ and with $n \ds\sqrt{p}\,t/a^2 \to + \infty$, is asymptotically minimax consistent, that is 
$$ \gamma( \Delta_m(t), Q(\mathcal{F}(\alpha), \varphi)) \to 0 .$$

Lower bound: if $ \alpha > 1/2$ and if
\[
a^2n \to + \infty \, , \quad p = o(1) \cdot (a^2 n)^{4 \alpha -1} \quad  \text{ and } \quad a^2 n \sqrt{p} \, \varphi^{2 + \frac 1{2 \alpha} } \to 0 
\]
then
\[
\gamma = \inf\limits_{\Delta} \gamma(\Delta, Q(\mathcal{F}( \alpha), \varphi) ) \to 1
\]
\end{theorem}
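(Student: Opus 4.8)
The plan is to control both error probabilities by Chebyshev's inequality, feeding in the moment bounds of Proposition~\ref{prop: statistic properties}. For the type~I error, under $H_0$ the statistic is centered with variance $a^4/(n(n-1)p)$, so
\[
\eta(\Delta_m(t)) = P_I(\widehat{\mathcal{D}}_{n,p,m} > t)\leq\frac{a^4}{n(n-1)p\,t^2}\longrightarrow 0,
\]
because $n\sqrt p\,t/a^2\to+\infty$. For the type~II error I would first bound $\mathbb{E}_{a,\Sigma}(\widehat{\mathcal{D}}_{n,p,m})$ from below uniformly over $Q(\mathcal{F}(\alpha),\varphi)$: splitting the ellipsoid constraint at the band gives $\frac1p\sum_{|i-j|>m}\sigma_{ij}^2\leq m^{-2\alpha}$, hence $\frac1p\sum_{|i-j|\leq m}\sigma_{ij}^2\geq\varphi^2-m^{-2\alpha}\geq\varphi^2(1-D^{-2})$ by \eqref{propriete de m}; together with $1/\sqrt{2m}\geq K\varphi^{1/(2\alpha)}/\sqrt2$ this yields $\mathbb{E}_{a,\Sigma}(\widehat{\mathcal{D}}_{n,p,m})\geq \tfrac{K(1-D^{-2})}{\sqrt2}a^4\varphi^{2+1/(2\alpha)}$. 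Since $t\leq c\,a^4\varphi^{2+1/(2\alpha)}$ with $c<K(1-D^{-2})/\sqrt2$, the gap $\mathbb{E}_{a,\Sigma}(\widehat{\mathcal{D}}_{n,p,m})-t$ is a fixed fraction of the mean, so
\[
\beta(\Delta_m(t),Q)\leq\sup_\Sigma\frac{\Var_{a,\Sigma}(\widehat{\mathcal{D}}_{n,p,m})}{(\mathbb{E}_{a,\Sigma}(\widehat{\mathcal{D}}_{n,p,m})-t)^2}.
\]
It then remains to insert the bounds on $T_1,T_2$, replace $m$ by its order $\varphi^{-1/\alpha}$ through \eqref{propriete de m}, and check term by term that each contribution is $o(1)$; after using $p\varphi^{1/\alpha}\to+\infty$ (which follows from $m/p\to0$) and $a\leq1$, every term reduces to a positive power of $a^2n\sqrt p\,\varphi^{2+1/(2\alpha)}\to+\infty$ and hence vanishes.

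\textbf{Lower bound: reduction and prior.} For the lower bound I would pass to a Bayesian problem. For any prior $\pi$ supported on $Q(\mathcal{F}(\alpha),\varphi)$, writing $L_\pi = dP_{a,\pi}^{\otimes n}/dP_{a,I}^{\otimes n}$ for the likelihood ratio of the mixture, the standard reduction gives
\[
\gamma\geq 1-\tfrac12\sqrt{\mathbb{E}_{a,I}[L_\pi^2]-1},
\]
so it suffices to exhibit $\pi$ with $\mathbb{E}_{a,I}[L_\pi^2]\to1$. I would take $\Sigma = I+U$ with $U$ symmetric, zero on the diagonal, and $U_{ij}=\epsilon_{ij}u$ for $1\leq|i-j|\leq m_0$ and $0$ otherwise, where $(\epsilon_{ij})_{i<j}$ are i.i.d.\ Rademacher signs and the scale is calibrated by $m_0\asymp\varphi^{-1/\alpha}$, $u\asymp\varphi^{1+1/(2\alpha)}$, so that $u^2 m_0\asymp\varphi^2$ (separation) and $u^2 m_0^{2\alpha+1}\asymp1$ (ellipsoid), matching the extremal scale of \eqref{probextremal}. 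Since the constraints defining $Q(\mathcal{F}(\alpha),\varphi)$ depend only on $\sigma_{ij}^2=u^2$, which is deterministic, $\pi$ is supported on the alternative; and because $\|U\|_{\mathrm{op}}\leq m_0 u\asymp\varphi^{1-1/(2\alpha)}\to0$ for $\alpha>1/2$, every realization is positive definite.

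\textbf{Lower bound: the $\chi^2$ computation.} The crux is the second moment. As different missingness patterns yield mutually singular laws, for fixed $\Sigma=I+U$, $\Sigma'=I+U'$ the one-observation kernel factorizes over the random observed set $S$ (each index kept with probability $a$):
\[
\mathbb{E}_{a,I}\!\left[\frac{dP_{a,\Sigma}}{dP_{a,I}}\frac{dP_{a,\Sigma'}}{dP_{a,I}}\right] = \mathbb{E}_{S}\Big[\det\big(I_S-U_S U'_S\big)^{-1/2}\Big]=:G(\Sigma,\Sigma'),
\]
where $U_S,U'_S$ are principal submatrices and I used the Gaussian identity $\int d\mathcal N(0,A)\,d\mathcal N(0,B)/d\mathcal N(0,I)=\det(I-(A-I)(B-I))^{-1/2}$. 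Independence across $k$ gives $\mathbb{E}_{a,I}[L_\pi^2]=\mathbb{E}_{\epsilon,\epsilon'}[G(\Sigma,\Sigma')^n]$ with $\epsilon,\epsilon'$ independent. Expanding $-\tfrac12\log\det(I_S-U_SU'_S)=\tfrac12\mathrm{tr}(U_SU'_S)+\tfrac14\mathrm{tr}((U_SU'_S)^2)+\cdots$ and averaging over $S$ produces the leading term $\tfrac{a^2}2\,\mathrm{tr}(UU')$ — this is exactly where the factor $a^2$ enters — so that, to main order,
\[
\mathbb{E}_{a,I}[L_\pi^2]\approx\mathbb{E}_{\epsilon,\epsilon'}\Big[\exp\big(\tfrac{n a^2}2\mathrm{tr}(UU')\big)\Big]=\prod_{i<j}\cosh\!\big(na^2u_{ij}^2\big)\leq\exp\Big(\tfrac{n^2a^4}2\sum_{i<j}u_{ij}^4\Big).
\]
Since $\sum_{i<j}u_{ij}^4\asymp p\,m_0\,u^4\asymp p\,\varphi^{4+1/\alpha}$, the exponent equals $\tfrac12\big(a^2n\sqrt p\,\varphi^{2+1/(2\alpha)}\big)^2(1+o(1))\to0$ under the hypothesis $a^2n\sqrt p\,\varphi^{2+1/(2\alpha)}\to0$; combined with $\mathbb{E}_{a,I}[L_\pi^2]\geq1$ this forces $\mathbb{E}_{a,I}[L_\pi^2]\to1$, whence $\gamma\to1$.

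\textbf{Main obstacle.} The delicate point is the rigorous control of the remainder in the log-determinant expansion inside the mixture over $S$: one must show that the higher traces $\mathrm{tr}((U_SU'_S)^k)$, $k\geq2$, after averaging over $S$ and over the signs and after raising to the power $n$, are asymptotically negligible, so that the $\cosh$ factorization is exact in the limit. This is precisely where the structural hypotheses $a^2n\to+\infty$ and $p=o((a^2n)^{4\alpha-1})$ enter: they keep the effective perturbation $na^2u^2$ and the dimension $p$ in the regime where only the linear term of the expansion survives.
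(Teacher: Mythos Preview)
Your upper bound is essentially the paper's argument: the same lower bound on the mean via the band/tail split and \eqref{propriete de m}, then Chebyshev with the variance bounds of Proposition~\ref{prop: statistic properties}. The only cosmetic difference is that the paper invokes the CLT of Proposition~\ref{prop: statistic properties} for the type~I error while you use Chebyshev; both give $\eta(\Delta_m(t))\to0$ under $n\sqrt p\,t/a^2\to+\infty$.

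For the lower bound you take a genuinely different route. The paper uses the \emph{Kullback--Leibler} divergence: it bounds $\|P_I-P_\pi\|_1^2\le\frac12K(P_I,P_\pi)$ and shows $K(P_I,P_\pi)=\mathbb E_\varepsilon\mathbb E_I^{(\varepsilon)}[-L_{n,p}]\to0$ by conditioning on the missingness $\varepsilon$, Taylor expanding $(\Sigma_U^{\varepsilon_k})^{-1}-I$ and $\log\det\Sigma_U^{\varepsilon_k}$ to fourth order, then averaging over the Rademacher signs to obtain a finite sum of $\log\cosh$ terms whose $\mathbb E_\varepsilon\mathbb E_I^{(\varepsilon)}$-expectations are bounded via $\frac{x^2}2-\frac{x^4}{12}\le\log\cosh x\le\frac{x^2}2$. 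You instead use the $\chi^2$ route, computing $\mathbb E_{a,I}[L_\pi^2]$ through the closed-form Gaussian affinity $\det(I_S-U_SU'_S)^{-1/2}$ and then averaging over signs to reach a $\prod\cosh$. Both methods use the same prior and identify the same driving quantity $(a^2n\sqrt p\,\varphi^{2+1/(2\alpha)})^2$. The paper's KL approach has the structural advantage that the $\log$ sits \emph{outside} the Rademacher average, so the $\log\cosh$ factorization is exact and only expectations of polynomials in $X$ and $\varepsilon$ remain; your $\chi^2$ approach trades a cleaner Gaussian integral for an extra approximation step, since $(\mathbb E_S[e^{Z_S}])^n$ must be compared to $e^{n\mathbb E_S[Z_S]}$ before the sign average---a fluctuation control you do not address, in addition to the higher trace remainders you flag. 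Both obstacles are where the side conditions $a^2n\to\infty$ and $p=o((a^2n)^{4\alpha-1})$ are consumed, and either method can be completed under them.
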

\begin{proof}[Proof of Theorem~\ref{theo:upperbound1}] The proof is given in section~\ref{sec:proofs}.
\end{proof}

From the previous theorem we deduce that the minimax separation rate is given by:
\[
\widetilde\varphi_\alpha = \Big( \ds {a^2  n \ds\sqrt{p}} \Big)^{-\frac{2 \alpha}{4 \alpha +1}}
\]
Thus the separation $\widetilde\varphi_\alpha$ obtained for the observations with missing values is slower by the $a^2$ factor than the separation rate obtained in the case of fully observed vectors. 

Note that the conditions on $t$, the threshold of $\Delta_m(t)$ in (\ref{testm}), are compatible. Indeed, $a^2 / (n \sqrt{p}) \ll c \cdot a^4 \varphi^{2 + 1/(2\alpha)}$ is equivalent to our assumption that $a^2 n \sqrt{p} \, \varphi^{2 +  1/(2 \alpha )} \to \infty$.

\subsection{Adaptation}
\label{sec: adaptivity general case}
In this section we construct an asymptotically adaptive consistent test procedure $\Delta_{ad}$ free of the parameter $\alpha \in A:=[\alpha_* , \alpha^*_{n,p} ] \subset ]1/2 , + \infty[$,  with $\alpha^*_{n,p} \to + \infty $ and $\alpha^*_{n,p} = o(1)\ln (a^2 n \ds\sqrt{p})$, to test $H_0$  given in \eqref{H0} against the large alternative 
\begin{equation*}
\label{H1adap1}
  H_1 : \Sigma \in \underset{\alpha \in A}{\bigcup} \Big\{ \mathcal{F}(\alpha) ~ ; ~
 \ds\frac 12  \ds \sum_{i<j} \sigma_{ij}^2  \geq (\mathcal{C} \Phi_\alpha)^2 \Big\} ,
\end{equation*}
where $\mathcal{C}>0$ is some constant and 
\[
\Phi_\alpha = \left(   \ds\frac{\ds\sqrt{\ln \ln (a^2n\ds\sqrt{p})}}{a^2n\ds\sqrt{p}} \right)^{ \frac{2\alpha}{4 \alpha +1}}.
\]
For each $\alpha \in [\alpha_* , \alpha^*_{n,p} ] $, there exists  $ l \in \mathbb{N}^*$ such that
\[
2^{l-1} \leq   (\Phi_\alpha)^{- \frac 1{\alpha}}  < 2^l, \quad  \text{it suffices to take } l \sim \ds\frac{\frac 2{4 \alpha +1 } \ln(a^2n \ds\sqrt{p})}{\ln (2)} .
\]
Let $L_*, L^* \in \mathbb{N}^*$  be defined by
\[  L_* = \Big( \ds\frac {2}{ (4 \alpha^*_{n,p} +1)\ln 2} \Big)  \ln (a^2n \ds\sqrt{p})  \quad \text{and} \quad
L^* = \Big( \frac 2{(4 \alpha_* +1) \ln 2 } \Big)\ln(a^2n \ds\sqrt{p}).
\]
We see that $L_* $ a,d $L^*$ tend to infinity. We define the adaptive  test procedure as follows
\begin{equation}
\label{testadap1}
\Delta_{ad}= \max_{L_* \leq l \leq L^*} \mathds{1}(\mathcal{D}_{n,p, 2^l} > t_l),
\end{equation}  
where $\mathcal{D}_{n,p, 2^l}$ is the test statistic defined in \eqref{teststat}, with $m$ replaced by $2^l$. 

\begin{theorem}
\label{theo : adaptivitygeneralcase}
The test procedure $\Delta_{ad}$ defined in \eqref{testadap1}, with $t_l = a^2\ds\frac{\ds\sqrt{\mathcal{C}^* \ln l }}{n\ds\sqrt{p}} $, verifies :

\noindent
Type I error probability : $\eta(\Delta_{ad}) \to 0$, for $ \mathcal{C}^* >4$.

\noindent
Type II error probability :   if 
\[
a^2n \ds\sqrt{p} \to + \infty \, ,2^{L^*}/p \to 0 \, , \, \ln( a^2 n \ds\sqrt{p})/n \to 0 ~ \text{ and } \mathcal{C}^2 > 1 + 4 \ds\sqrt{\mathcal{C}^*}
\]
we get 
\[
\beta(\Delta_{ad} \, , \underset{\alpha \in A}{\cup} Q( \mathcal{F}(\alpha), \mathcal{C} \Phi_\alpha) \to 0.
\]
\end{theorem}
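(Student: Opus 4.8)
The plan is to treat the two error probabilities separately, as the statement is organised. For the type~I error I would start from the fact that $\Delta_{ad}=1$ as soon as one of the single-scale tests rejects, so a union bound over the levels gives
\[
\eta(\Delta_{ad}) = P_I\Bigl(\max_{L_* \le l \le L^*} \mathds{1}(\mathcal{D}_{n,p,2^l} > t_l)=1\Bigr) \le \sum_{l=L_*}^{L^*} P_I(\mathcal{D}_{n,p,2^l} > t_l).
\]
Under $H_0$ each $\mathcal{D}_{n,p,2^l}$ is a centered degenerate $U$-statistic (a quartic Gaussian--Bernoulli chaos) of variance $a^4/(n(n-1)p)$ by Proposition~\ref{prop: statistic properties}, and the chosen threshold satisfies $t_l/\sqrt{\Var_{a,I}(\mathcal{D}_{n,p,2^l})} \sim \sqrt{\mathcal{C}^* \ln l}$, so each summand is the probability that the normalised statistic exceeds $\sqrt{\mathcal{C}^* \ln l}$ standard deviations.

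The key step for the type~I bound is to upgrade the central limit theorem of Proposition~\ref{prop: statistic properties} into a genuine non-asymptotic deviation inequality valid simultaneously for every scale. I would do this by controlling high even moments of the degenerate $U$-statistic (equivalently, by a hypercontractivity estimate for the degree-four chaos), obtaining a tail of the form $P_I(\mathcal{D}_{n,p,2^l} > t_l) \le C\, l^{-\mathcal{C}^*/4}$. Summing over $L_* \le l \le L^*$ and using that $L_* \to +\infty$ (a consequence of $\alpha^*_{n,p} = o(\ln(a^2 n\sqrt{p}))$) bounds the series by the vanishing tail $\sum_{l \ge L_*} l^{-\mathcal{C}^*/4} \to 0$ precisely when $\mathcal{C}^*/4 > 1$, which is exactly why the statement asks for $\mathcal{C}^* > 4$.

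For the type~II error I would use that $\Delta_{ad}=0$ forces \emph{every} single test to accept, so $\beta(\Delta_{ad}, \cup_{\alpha} Q(\mathcal{F}(\alpha), \mathcal{C}\Phi_\alpha)) \le \sup_{\Sigma} P_\Sigma(\mathcal{D}_{n,p,2^l} \le t_l)$ for any scale $l$. Given $\Sigma \in Q(\mathcal{F}(\alpha), \mathcal{C}\Phi_\alpha)$ with unknown $\alpha \in A$, I would select the data-free scale $l=l(\alpha)$ defined by $2^{l-1} \le \Phi_\alpha^{-1/\alpha} < 2^l$; by the definition of $L_*, L^*$ this level always lies in $[L_*,L^*]$ and hence enters the maximum. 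For that scale the Sobolev constraint $\frac1p \sum_{i<j}\sigma_{ij}^2 |i-j|^{2\alpha} \le 1$ controls the truncation bias, since $\sum_{|i-j|>2^l}\sigma_{ij}^2 \le 2^{-2l\alpha}\, p$, so the retained signal is at least $p\bigl((\mathcal{C}\Phi_\alpha)^2 - 2^{-2l\alpha}\bigr)$; inserting the near-optimal bandwidth into the mean formula of Proposition~\ref{prop: statistic properties} yields $\mathbb{E}_{a,\Sigma}(\mathcal{D}_{n,p,2^l}) \gtrsim (\mathcal{C}^2-1)\, a^2 \sqrt{\ln\ln(a^2 n\sqrt{p})}/(n\sqrt{p})$, of the same order as $t_l$ up to the ratio $\mathcal{C}^2/\sqrt{\mathcal{C}^*}$.

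I would finish by Chebyshev's inequality,
\[
P_\Sigma(\mathcal{D}_{n,p,2^l} \le t_l) \le \frac{\Var_{a,\Sigma}(\mathcal{D}_{n,p,2^l})}{(\mathbb{E}_{a,\Sigma}(\mathcal{D}_{n,p,2^l}) - t_l)^2},
\]
checking that the gap $\mathbb{E}_{a,\Sigma}(\mathcal{D}) - t_l$ stays positive and of the right order, which — after absorbing the fluctuations of the statistic — holds exactly when $\mathcal{C}^2 > 1 + 4\sqrt{\mathcal{C}^*}$, and that the variance terms $T_1/(n(n-1)p^2)$ and $T_2/(np^2)$ of Proposition~\ref{prop: statistic properties} are negligible against the squared gap under the stated conditions $a^2 n\sqrt{p}\to +\infty$, $2^{L^*}/p \to 0$ and $\ln(a^2 n\sqrt{p})/n \to 0$. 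The main obstacle I anticipate is the type~I step: Proposition~\ref{prop: statistic properties} supplies only asymptotic normality, whereas the union bound over the $\asymp \ln(a^2 n\sqrt{p})$ scales requires an explicit, uniform-in-$l$ exponential tail, and producing the moment/concentration inequality for the degenerate quartic $U$-statistic with a constant sharp enough to reach the threshold $\mathcal{C}^* > 4$ is the technical heart of the argument. A secondary difficulty is making the bias/variance control in the type~II step uniform over the growing interval $A$, since both $\ln l$ and the bandwidth $2^{l(\alpha)}$ vary with $\alpha$.
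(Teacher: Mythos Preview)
Your treatment of the type~II error is essentially the paper's: pick the oracle scale $l_0$ with $2^{l_0-1}\le \Phi_\alpha^{-1/\alpha}<2^{l_0}$, lower-bound the mean via the Sobolev truncation, compare to $t_{l_0}$ to get the gap $(\mathcal{C}^2-1)/2-2\sqrt{\mathcal{C}^*}>0$, and conclude by Markov/Chebyshev with the variance bounds of Proposition~\ref{prop: statistic properties}. Nothing to change there.

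The type~I part, however, diverges from the paper and your proposed route will not deliver the stated threshold $\mathcal{C}^*>4$. Hypercontractivity (or high-moment control) for a degree-four Gaussian/Bernoulli chaos yields at best tails of the form $\exp(-c\,t^{1/2})$, not the near-Gaussian $\exp(-t^2/4)$ you need; with $t=\sqrt{\mathcal{C}^*\ln l}$ this gives $\exp(-c(\ln l)^{1/4})$, which does not produce a summable $l^{-\mathcal{C}^*/4}$. Even a Hanson--Wright type bound would carry an unspecified constant in the exponent and would not pin down $\mathcal{C}^*>4$. The paper instead exploits the \emph{martingale} structure of the degenerate $U$-statistic: writing $\mathcal{D}_{n,p,2^l}=\sqrt{v_{n,p,l}}\,S_n$ with $S_h=\sum_{k=2}^h Z_k$ a martingale, it applies a Berry--Esseen inequality for martingales (Skorokhod representation plus Lemma~3.3 of Hall--Heyde) to obtain
\[
\bigl|P_I(S_n\le u)-\Phi(u)\bigr|\ \le\ 16\,\varepsilon^{1/2}e^{-u^2/4}\ +\ C\,\varepsilon^{-2}J_n,\qquad J_n=O(n^{-1})+O(2^{-l}).
\]
Combined with $1-\Phi(u)\le u^{-1}e^{-u^2/2}$ and $u=\sqrt{\mathcal{C}^*\ln l}$, the dominating term is the Berry--Esseen error $e^{-u^2/4}=l^{-\mathcal{C}^*/4}$, which is precisely where the condition $\mathcal{C}^*>4$ comes from; the remainder $\sum_l \varepsilon^{-2}J_n$ is handled by $L^*/n\to 0$ (equivalent to $\ln(a^2 n\sqrt{p})/n\to 0$) and $2^{-L_*}\to 0$. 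So the ``technical heart'' you flagged is resolved not by chaos concentration but by a quantitative martingale CLT, and the specific constant $4$ is an artefact of the $e^{-u^2/4}$ in that Berry--Esseen bound rather than a genuine Gaussian tail.
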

Note that the condition $2^{L^*}/p \to 0$ is equivalent to  $a^2 n \ll p^{2 \alpha_*}$.

\begin{proof}[Proof of Theorem~\ref{theo : adaptivitygeneralcase}] The proof of this theorem is similar the the proof of the Theorem \ref{theo : adaptivityToepltz} which  is given in section~\ref{sec:proofs}.
\end{proof}



\section{Toeplitz covariance matrices}
\label{sec: Toeplitz matrices}
In this section we assume that the covariance matrix $\Sigma$ is Toeplitz. In this case, we are interested   to test \eqref{H0} against the following alternative
\begin{equation}
\label{H1simpleToeplitz}
H_1 : \Sigma \in Q(\mathcal{T}(\alpha), \phi) 
\end{equation}
where $Q(\mathcal{T}(\alpha), \phi) $ is defined in \eqref{classe toeplitz} for $\phi$ instead of $\varphi$.
This testing problem is treated in \cite{ButuceaZgheib15}, for the particular  case $a=1$, where a weighted U-statistic $\widehat{\mathcal{A}}_{n,p} $ of order 2  is used to construct an asymptotically consistent test procedure that achieve the sharp separation rates. Similarly to the previous setup, we construct an asymptotically consistent test procedure with constant weights. Recall that in \cite{ButuceaZgheib2014A} the weights are defined as solution of 
 the following optimization problem:
\begin{equation}
\label{probextremal2}
\sup\limits_{ \{ w_j \geq 0 ; \sum_j w_j^2 = \frac 12 \} } \inf\limits_{\Sigma \in Q(\mathcal{T}(\alpha), \phi) } \ds\sum_{j \geq 1}w_j \sigma_j^2 = \inf\limits_{\Sigma \in Q(\mathcal{T}(\alpha), \phi) }\sum_{j\geq 1} \sigma_j^4= C^{1/2}(\alpha)\phi^{2 + \frac 1{2 \alpha}} 
\end{equation}
Remark that the  optimization problems given in \eqref{probextremal} and \eqref{probextremal2} have the same solution when $\phi \to 0$ such that $p \phi^{1/\alpha} \to + \infty$.

\subsection{Test procedure and separation rates}
\label{sec: separation rate cas Toeplitz}
Take $m  \in \mathbb{N}$ such that $m \to + \infty$ and $m$ verifies \eqref{propriete de m} for $\phi$ instead of $\varphi$, 
we define the following test statistic:
\begin{equation}
\label{teststatToeplitz}
\widehat{\mathcal{A}}_{n, p, m}= \ds\frac{1}{n(n-1)(p-m)^2} \cdot \ds\frac 1{\ds\sqrt{2m}} \underset{1 \leq k \neq l \leq n}{\ds\sum }\sum_{j=1}^m  \underset{m+1 \leq i_1, i_2 \leq p}{\ds\sum } Y_{k, i_1} Y_{k, i_1-j} Y_{l, i_2} Y_{l, i_2-j}
\end{equation}
The main difference between the two test statistic $\widehat{\mathcal{D}}_{n, p, m} $ and $\widehat{\mathcal{A}}_{n, p, m}$ is that in this latter we take into consideration the fact that, we have  repeated information on the same diagonal elements.

Now, we give bounds on the moments of this test statistic :
\begin{proposition}
\label{prop : properties est toeplitz}
Under the null hypothesis $\widehat{\mathcal{A}}_{n, p, m}$ is a centered random variable whose variance is  $\Var_{a, I} (\widehat{\mathcal{A}}_{n, p, m})=  a^4/(n(n-1)(p-m)^2)$.
Moreover, we have that $ (n(p-m)/a^2) \cdot  \widehat{\mathcal{A}}_{n, p, m} / \to \mathcal{N}(0,1)$.
\noindent
Under the alternative hypothesis, for all $\Sigma \in \mathcal{T}(\alpha)$,
 \[
\mathbb{E}_{a, \Sigma}(\widehat{\mathcal{A}}_{n, p, m}) = (a^4/\ds\sqrt{2m}) \sum_{j =1}^m \sigma_j^2 \text{ and } 
\Var_{a, \Sigma}=  \ds\frac{R_1}{ n(n-1)(p-m)^4} + \ds\frac{R_2}{n(p-m)^2}, 
\]
where
\begin{eqnarray*}
R_1 &\leq & (p-m)^2 \cdot \{ a^4(1 + o(1)) +  \mathbb{E}_\Sigma(\widehat{\mathcal{A}}_{n, p, m}) \cdot (O(a^2 m) + O(a^3 m^{3/2 - 2 \alpha})) \\
& + & \mathbb{E}^2_\Sigma(\widehat{\mathcal{A}}_{n, p, m}) \cdot O(m^2 )  \}\\
R_2 &\leq & (p-m) \cdot \{a^2 \cdot \mathbb{E}_{\Sigma} (\widehat{\mathcal{A}}_{n, p, m}) \cdot o(1)  +  \mathbb{E}^{3/2}_{\Sigma}(\widehat{\mathcal{A}}_{n, p, m} ) \cdot (O( a\cdot m^{1/4})+ O(a^2 m^{3/4- \alpha})) \\
& + &   \mathbb{E}^2_{\Sigma}(\widehat{\mathcal{A}}_{n, p, m}) \cdot O(m)  \}.
\end{eqnarray*}
\end{proposition}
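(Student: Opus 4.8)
The plan is to reduce both moment calculations to the single-sample ``diagonal'' variables
$$
U_{k,j} \egal \ds\sum_{i=m+1}^p Y_{k,i}\,Y_{k,i-j}, \qquad 1 \leq j \leq m,
$$
because the kernel of the U-statistic factorizes across the two sample indices: writing $\widehat{\mathcal{A}}_{n,p,m} = \frac{1}{n(n-1)(p-m)^2}\sum_{k \neq l} h_{kl}$ with
$$
h_{kl} = \ds\frac{1}{\ds\sqrt{2m}} \ds\sum_{j=1}^m U_{k,j}\,U_{l,j},
$$
and for $k \neq l$ the vectors $(U_{k,j})_j$ and $(U_{l,j})_j$ are independent. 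The workhorse identity, used throughout, is that for any fixed set of indices $\mathbb{E}_{a,\Sigma}[\prod Y] = \mathbb{E}[\prod \varepsilon]\cdot \mathbb{E}[\prod X]$ by independence of the mask from the Gaussian vector; I would then evaluate the Gaussian factor by Wick's formula and the mask factor using $\varepsilon^2 = \varepsilon$ and $\mathbb{E}[\varepsilon]=a$. This immediately gives $\mathbb{E}_{a,\Sigma}[U_{k,j}] = (p-m)a^2\sigma_j$ in the Toeplitz case, hence by independence $\mathbb{E}_{a,\Sigma}[h_{kl}] = (p-m)^2 a^4 (2m)^{-1/2}\sum_{j=1}^m \sigma_j^2$, and finally the stated mean $\mathbb{E}_{a,\Sigma}(\widehat{\mathcal{A}}_{n,p,m}) = (a^4/\sqrt{2m})\sum_{j=1}^m \sigma_j^2$; in particular it vanishes under $H_0$.

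For the null variance I would exploit degeneracy. Under $\Sigma = I$ each $U_{k,j}$ is centered, so $\mathbb{E}_{a,I}[h_{kl}\mid Y_k]=0$, which kills every covariance between pairs sharing at most one index and leaves $\Var_{a,I}(\widehat{\mathcal{A}}_{n,p,m}) = 2\,\mathbb{E}_{a,I}[h_{12}^2]/(n(n-1)(p-m)^4)$. A Wick computation gives $\mathbb{E}_{a,I}[U_{1,j}U_{1,j'}] = (p-m)a^2\,\delta_{jj'}$ (the only surviving Wick pairing forces $i=i'$ and $j=j'$, since $j,j'\geq 1$ rules out the self-pairing and the crossed pairing would require $j+j'=0$), whence $\mathbb{E}_{a,I}[h_{12}^2] = (2m)^{-1}\sum_{j}((p-m)a^2)^2 = (p-m)^2 a^4/2$ and therefore $\Var_{a,I}(\widehat{\mathcal{A}}_{n,p,m}) = a^4/(n(n-1)(p-m)^2)$, as claimed. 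For asymptotic normality I would represent the normalized degenerate U-statistic $(n(p-m)/a^2)\widehat{\mathcal{A}}_{n,p,m}$ as a sum of martingale differences $D_l \propto \sum_{k<l} h_{kl}$ relative to $\mathcal{F}_l = \sigma(Y_1,\dots,Y_l)$ and invoke the martingale CLT, checking that the sum of conditional variances converges to the value above and that a conditional Lyapunov (fourth-moment) condition holds; the regime $m\to\infty$, $m/p\to 0$ is what forces the spurious cross terms to be negligible.

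The substantive part is the variance under the alternative, which I would obtain from the Hoeffding (projection) decomposition of the U-statistic. Writing $\theta = \mathbb{E}_{a,\Sigma}[h_{12}]$ and $g_1(Y_k) = \mathbb{E}_{a,\Sigma}[h_{kl}\mid Y_k] - \theta$, the standard two-term variance formula reads
$$
\Var_{a,\Sigma}(\widehat{\mathcal{A}}_{n,p,m}) = \ds\frac{c_1(n-2)\,\zeta_1 + c_2\,\zeta_2}{n(n-1)(p-m)^4}, \qquad \zeta_1 = \Var(g_1(Y_1)), \quad \zeta_2 = \Var(h_{12}),
$$
with numerical constants $c_1,c_2$. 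I would then identify $R_1$ with the fully degenerate term $\zeta_2$ (carrying the factor $(p-m)^2$) and $R_2$ with the projection term $\zeta_1$ (carrying the factor $(p-m)$), which matches the two denominators $n(n-1)(p-m)^4$ and $n(p-m)^2$ in the statement. Both $\zeta_1$ and $\zeta_2$ reduce, via the same factorization and Wick's theorem, to sums of products of $\sigma_j$'s weighted by Bernoulli moments in powers of $a$.

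The hard part will be bounding these moment sums. Each of $\zeta_1,\zeta_2$ expands into many index configurations according to which coordinates coincide, and the combinatorics is heavier than in the general case of Proposition~\ref{prop: statistic properties} precisely because the Toeplitz statistic reuses each diagonal value $\sigma_j$ across all $i_1,i_2$, so configurations forbidden there now align and contribute (this is the source of the extra $O(m^2)$ term in $R_1$ and of the mixed terms in $R_2$). I would organize the configurations by the number of free summation indices, express each group through the elementary sum $\sum_{j=1}^m \sigma_j^2 = \sqrt{2m}\,\mathbb{E}_{a,\Sigma}(\widehat{\mathcal{A}}_{n,p,m})/a^4$ and through convolution-type sums of the $\sigma_j$, and bound the latter by the Cauchy--Schwarz inequality together with the Sobolev constraint $\sum_{j\geq 1}\sigma_j^2 j^{2\alpha}\leq 1$ valid on $\mathcal{T}(\alpha)$. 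The polynomial decay this forces on the $\sigma_j$ is what produces the exponents $m^{3/2-2\alpha}$ and $m^{3/4-\alpha}$ appearing in the bounds, while the constraints $m\to\infty$, $m/p\to 0$ and $a\to 0$ determine which terms are leading and absorb the $o(1)$ remainders.
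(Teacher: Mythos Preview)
Your proposal is correct and follows essentially the same route the paper relies on: the paper does not write out a proof of this proposition but treats it as the Toeplitz analogue of Proposition~\ref{prop: statistic properties}, whose proof in turn is deferred to \cite{ButuceaZgheib2014A} with the only new ingredient being the factoring $\mathbb{E}_{a,\Sigma}[\prod Y]=\mathbb{E}[\prod\varepsilon]\,\mathbb{E}[\prod X]$. Your use of the Hoeffding projection to separate the $R_1$ and $R_2$ contributions is a clean organizing device that matches the two-term variance structure in the statement; the paper's implicit computation does the same bookkeeping by directly expanding $\Var(\sum_{k\neq l}h_{kl})$ and grouping covariances by how many sample indices are shared, which is equivalent. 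The martingale CLT you invoke for the asymptotic normality is exactly what the paper uses later (see the Berry--Esseen step in the proof of Theorem~\ref{theo : adaptivityToepltz}). One small caveat: you list $a\to 0$ among the regime constraints controlling the $o(1)$ terms, but the paper allows $a$ fixed as well; the remainders in $R_1,R_2$ are driven by $m\to\infty$, $m/p\to 0$ and the Sobolev decay, not by smallness of $a$.
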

 It is easy to show that, since $m$ verifies \eqref{propriete de m}, we have  for all $\Sigma \in \mathcal{T}(\alpha)$
\[
\mathbb{E}_{a, \Sigma}(\widehat{\mathcal{A}}_{n, p, m}) \geq a^4 B \cdot 
\phi^{2+ \frac 1{2 \alpha}}
\] 
where $B$ is given in \eqref{minoration de l'espreance}.

To test \eqref{H0} against \eqref{H1simpleToeplitz}, we define the following test procedure based on the statistic defined in \eqref{teststatToeplitz} :
\begin{equation}
\label{testtoeplitz}
  \Delta_m^T = \Delta^T_m(t) = \mathds{1} ( \widehat{\mathcal{A}}_{n,p,m} > t) ,  \quad t>0.
 \end{equation}
 
\begin{theorem}
\label{theo:upperboundToeplitz}  
\noindent

Upper bound: let $m \to + \infty$, such that $ m/p \to 0 $ and that  \eqref{propriete de m} holds. If  $ \alpha > 1/4 $ and if 
\[
  \phi \to 0 \text{ and } a^2 n p \phi^{2 + \frac 1{2 \alpha}} \to + \infty
\]
the test procedure defined in \eqref{testm} with $t \leq \kappa \cdot a^4  \phi^{2 + \frac 1{2 \alpha}}$ for some constant $\kappa \leq B$ such that $n p \, t/a^2 \to + \infty$ is consistent, that is $ \gamma( \Delta^T_m(t), Q(\mathcal{T}(\alpha), \phi)) \to 0$.

Lower bound: if $\alpha > 1/2$ and if
\[
   a^2np \to +\infty \quad \text{and } a^2 n p \phi^{2 + \frac 1{2\alpha} }  \to 0
\]
then 
\[
\gamma = \inf\limits_{\Delta} \gamma (\Delta, Q(\mathcal{T}(\alpha), \phi) ) \to 1.
\]
\end{theorem}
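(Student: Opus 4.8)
The plan is to prove the upper bound by a two-sided Chebyshev argument built on Proposition~\ref{prop : properties est toeplitz}, and the lower bound by the Bayesian (mixture) reduction together with a second-moment / $\chi^2$ computation adapted to the missing-data Gaussian likelihood. For the type~I error, under $H_0$ the statistic $\widehat{\mathcal{A}}_{n,p,m}$ is centered with $\Var_{a,I}(\widehat{\mathcal{A}}_{n,p,m}) = a^4/(n(n-1)(p-m)^2)$, so Markov's inequality gives
$$
\eta(\Delta_m^T(t)) = P_I(\widehat{\mathcal{A}}_{n,p,m} > t) \leq \frac{\Var_{a,I}(\widehat{\mathcal{A}}_{n,p,m})}{t^2} \asymp \Big(\frac{a^2}{n(p-m)\,t}\Big)^2,
$$
which vanishes precisely because $np\,t/a^2 \to +\infty$ (using $p-m\sim p$ since $m/p\to 0$).

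For the type~II error I would use the uniform lower bound $\mathbb{E}_{a,\Sigma}(\widehat{\mathcal{A}}_{n,p,m}) \geq a^4 B\,\phi^{2+1/(2\alpha)}$ on $\mathcal{T}(\alpha)$ together with $t \leq \kappa a^4\phi^{2+1/(2\alpha)}$, $\kappa \leq B$, so that the gap $\mathbb{E}_{a,\Sigma}(\widehat{\mathcal{A}}_{n,p,m}) - t$ is of order $a^4\phi^{2+1/(2\alpha)}$. Chebyshev bounds $P_\Sigma(\widehat{\mathcal{A}}_{n,p,m} \leq t)$ by $\Var_{a,\Sigma}(\widehat{\mathcal{A}}_{n,p,m})/(\mathbb{E}_{a,\Sigma}(\widehat{\mathcal{A}}_{n,p,m}) - t)^2$, so the task reduces to showing $\Var_{a,\Sigma}(\widehat{\mathcal{A}}_{n,p,m})/\mathbb{E}_{a,\Sigma}^2(\widehat{\mathcal{A}}_{n,p,m}) \to 0$ uniformly. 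I would insert the bounds on $R_1,R_2$ from Proposition~\ref{prop : properties est toeplitz} and check each term against $\mathbb{E}_{a,\Sigma}^2 \gtrsim a^8\phi^{4+1/\alpha}$: the dominant $a^4$-term of $R_1$ gives $(a^2 np\,\phi^{2+1/(2\alpha)})^{-2}\to 0$, and the $m$-dependent terms are controlled using $m\asymp\phi^{-1/\alpha}$ from \eqref{propriete de m}, $m/p\to 0$ and $a^2np\,\phi^{2+1/(2\alpha)}\to+\infty$. This bookkeeping is the most computation-heavy part of the upper bound, but it is routine; note only $\alpha>1/4$ is needed here.

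For the lower bound I would pass to a Bayesian problem, putting a prior on $Q(\mathcal{T}(\alpha),\phi)$ by setting $\sigma_j = \epsilon_j v_j$ for $1\leq j\leq m$ (and $0$ otherwise), with $\epsilon_j$ i.i.d.\ Rademacher and flat weights $v_j^2=\phi^2/m$, $m\asymp\phi^{-1/\alpha}$, so that both $\sum_j v_j^2 j^{2\alpha}\leq 1$ and $\sum_j v_j^2\geq\phi^2$ hold. One must verify that the random matrix is positive definite and lies in $\mathcal{T}(\alpha)$: its spectral density is $1+2\sum_{j\leq m}\epsilon_j v_j\cos(j\theta)$, and since $\sum_{j\leq m}v_j\asymp\sqrt{m}\,\phi=\phi^{1-1/(2\alpha)}\to 0$ when $\alpha>1/2$, the perturbation is uniformly small and positivity holds; this is exactly where the stronger assumption $\alpha>1/2$ in the lower bound enters. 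I would restrict the prior to this overwhelmingly likely event and write $\bar P$ for the resulting mixture.

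It then suffices to show the $\chi^2$-divergence $\mathbb{E}_I[(d\bar P/dP_I)^2]\to 1$, which forces $\gamma\to 1$. Since the missingness indicators have the same law under both hypotheses, the per-vector likelihood ratio given the observed set $S$ is a ratio of centered Gaussian densities, and for two independent draws $\Sigma=I+\Delta$, $\Sigma'=I+\Delta'$ a direct Gaussian integral yields $\mathbb{E}_I[L_1(\Sigma)L_1(\Sigma')\mid\varepsilon]=\det(I_S-\Delta_S\Delta'_S)^{-1/2}$, with $\Delta_S,\Delta'_S$ the off-diagonal submatrices indexed by $S$. By independence across the $n$ vectors, $\mathbb{E}_I[(d\bar P/dP_I)^2]=\mathbb{E}_{\epsilon,\epsilon'}[(\mathbb{E}_\varepsilon\det(I_S-\Delta_S\Delta'_S)^{-1/2})^n]$. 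Expanding $-\tfrac12\log\det(I-M)=\tfrac12\,\mathrm{tr}(M)+O(\mathrm{tr}(M^2))$ with $M=\Delta_S\Delta'_S$, and averaging over the Bernoulli pattern (each index pair lies in $S$ with probability $a^2$), the per-vector factor is $1+a^2p\sum_{j\leq m}\epsilon_j\epsilon'_j v_j^2+\cdots$; raising to the power $n$ and averaging over the signs via $\mathbb{E}\exp(c\,\epsilon_j\epsilon'_j)=\cosh(c)$ gives
$$
\mathbb{E}_I\big[(d\bar P/dP_I)^2\big]\approx\prod_{j\leq m}\cosh\!\big(a^2np\,v_j^2\big)=\exp\!\Big(\tfrac12(a^2np)^2\sum_{j\leq m}v_j^4\,(1+o(1))\Big).
$$
With the flat weights $\sum_{j\leq m}v_j^4=\phi^4/m\asymp\phi^{4+1/\alpha}$, the exponent is of order $(a^2np\,\phi^{2+1/(2\alpha)})^2\to 0$ by hypothesis, so $\mathbb{E}_I[(d\bar P/dP_I)^2]\to 1$. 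The main obstacle is making this expansion rigorous: one must show the neglected higher-order determinant terms (bounded through $\mathrm{tr}(M^2)$ and the operator norm of $\Delta_S\Delta'_S$, which is small when $\alpha>1/2$) and the error from conditioning the prior on the positive-definite event are of lower order, for which $a^2np\to+\infty$ and $\phi\to 0$ are used.
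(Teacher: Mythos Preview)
Your upper bound is essentially the paper's argument: Chebyshev against the uniform lower bound on $\mathbb{E}_{a,\Sigma}(\widehat{\mathcal{A}}_{n,p,m})$, followed by the term-by-term check of the variance bounds from Proposition~\ref{prop : properties est toeplitz}. The only difference is that for the type~I error the paper invokes the asymptotic normality stated in that Proposition and bounds $\eta$ by $\Phi(-n(p-m)t/a^2)+o(1)$, whereas you use a direct second-moment bound; both give $o(1)$ under $np\,t/a^2\to\infty$.

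The lower bound is where you take a genuinely different route. The paper uses the same Rademacher Toeplitz prior (constant-magnitude off-diagonals $\sigma=\phi^{1+1/(2\alpha)}$ on the first $T\asymp\phi^{-1/\alpha}$ diagonals, random signs $u_{|i-j|}$), but controls the total variation via Pinsker and the Kullback--Leibler divergence $K(P_I,P_\pi)=\mathbb{E}_\varepsilon\mathbb{E}_I^{(\varepsilon)}[-L_{n,p}]$: it Taylor-expands $(\Sigma_U^{\varepsilon_k})^{-1}-I$ and $\log\det\Sigma_U^{\varepsilon_k}$ to fourth order, averages over $U$ to produce $\log\cosh(\cdot)$ terms, and shows each of these has vanishing $\mathbb{E}_\varepsilon\mathbb{E}_I^{(\varepsilon)}$-expectation. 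Your approach instead bounds the $\chi^2$-divergence through the closed-form identity $\mathbb{E}_I[L_\Sigma L_{\Sigma'}\mid\varepsilon]=\det(I_S-\Delta_S\Delta'_S)^{-1/2}$, then averages over the missingness pattern and finally over the two independent sign draws. This is the standard Ingster second-moment computation and is arguably cleaner here: the Gaussian integral collapses the per-vector factor to a single determinant, and the Rademacher average reduces to $\prod_j\cosh(a^2np\,v_j^2)$, giving exactly the exponent $(a^2np\,\phi^{2+1/(2\alpha)})^2$. The paper's KL route is more laborious but in principle asks for less (KL$\to 0$ is weaker than $\chi^2\to 0$); in the Toeplitz case the two end up yielding the same hypotheses $a^2np\to\infty$ and $a^2np\,\phi^{2+1/(2\alpha)}\to 0$, and both need $\alpha>1/2$ for exactly the reason you identify, namely $\|\Delta\|\lesssim\phi^{1-1/(2\alpha)}\to 0$. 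Your acknowledged obstacle (controlling the $\mathrm{tr}((\Delta_S\Delta'_S)^2)$ and higher remainder after averaging over $\varepsilon$ and raising to the $n$th power) is real but tractable under these assumptions.
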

 The main consequence of Theorem \ref{theo:upperboundToeplitz} is that the separation rate is given as follows :
\[
\widetilde\phi = \Big( \ds {a^2  n p} \Big)^{- \frac{2 \alpha}{4 \alpha +1}}
\]
\begin{proof}[Proof of Theorem \ref{theo:upperboundToeplitz}] The proof follows the same steps  as the proof of Theorem \ref{theo:upperbound1}, we therefore omit it. The most significant difference is that in order to show the lower bound in this case, we consider a sub-class of Toeplitz matrices:
\[
Q_T = \{ \Sigma_U : [\Sigma_U]_{ij} = \mathds{1}_{(i=j)}+u_{|i-j|}\sigma \mathds{1}_{(|1<|i - j|<T)} \text{ for all } 1 \leq i, j \leq p~,  ~U  \in \mathcal{U \}} ,
\]
where $\sigma$ and $T$ are defined in \eqref{sigma et T} and where
\[
\mathcal{U} = \{U= [u_{|i-j|}]_{1 \leq |i- j| \leq p-1 } :  u_{ii} =0 , \forall i \mbox{ and } \, u_{|i-j|}\pm 1 \cdot \mathds{1}_{(|i-j| < T)}, \mbox{ for } i\neq j \}.
\] 
Indeed, the signs are randomized but constant on each diagonal. We  re-write the terms of  $L_{n,p}$   taking into consideration the fact that the matrices are Toeplitz see, for example,  the proof of lower bound in \citep{ButuceaZgheib15}.
\end{proof}

\begin{remark}
Remark that the conditions on $m$ imply that $m$ is of order of $\phi^{- \frac 1{\alpha}}$ in the case of Toeplitz covariance matrices and of order of $\varphi^{- \frac 1{\alpha}}$ in the case of general covariance matrices. 
\end{remark} 



\subsection{Adaptation}
\label{sec: adaptivity Toeplitz}
In this section, it is always assumed that the covariance matrices are Toeplitz. Our goal is to construct a consistent test procedure independent of the parameter $\alpha \in A:=[\alpha_* , \alpha^*_{n,p}] \subset ]1/4, + \infty[$, such that $\alpha^*_{n,p} \to + \infty$ and $\alpha^*_{n,p} =o(1) \ln(a^2np)$, to test $H_0$  given in \eqref{H0} against the large alternative 
\begin{equation}
\label{H1comp}
  H_1 : \Sigma \in \underset{\alpha \in A}{\bigcup} \Big\{ \mathcal{T}(\alpha) ~ ; ~
   \ds \sum_{j =1}^{p-1} \sigma_j^2  \geq (\mathcal{C} \psi_\alpha)^2 \Big\} ,
\end{equation}
where $\mathcal{C}  >0$ is some constant and 
\[
\psi_\alpha = \left(   \ds\frac{\ds\sqrt{\ln \ln (a^2np )}}{a^2np} \right)^{ \frac{2\alpha}{4 \alpha +1}}.
\]
 First,  see that  $\forall \alpha \in[\alpha_* , \alpha^*_{n,p}]$, $\exists l \in \mathbb{N}^*$ such that
\[
2^{l-1} \leq  (\psi_\alpha)^{- \frac 1{\alpha}}  < 2^l, \quad  \text{it suffices to take } l \sim \ds\frac{\frac 2{4 \alpha +1 } \ln(a^2np)}{\ln(2)} 
\]
Let $L_*,\, L^* \in \mathbb{N}^*$ be defined by 
\[
L_* = \Big( \frac 2{(4 \alpha^*_{n,p} +1) \ln2 } \Big) \ln(a^2np) \quad \text{and} \quad L^* = \Big( \frac 2{(4 \alpha_* +1) \ln2 } \Big)\ln(a^2np)
\]
We aggregate tests for all given values of $l$ from $L_*$ to $L^*$ giving the following test procedure free of the parameter $\alpha$:
\begin{equation}
\label{testadap}
\Delta_{ad}= \max_{L_* \leq l \leq L^*} \mathds{1}(\mathcal{A}_{n,p, 2^l} > t_l),
\end{equation}  
where $\mathcal{A}_{n,p, 2^l}$ is the test statistic defined in \eqref{teststatToeplitz}, with $m$ replaced by $2^l$. 
\begin{theorem}
\label{theo : adaptivityToepltz}
The test procedure $\Delta_{ad}$ defined in \eqref{testadap}, with $t_l = a^2\ds\frac{\ds\sqrt{\mathcal{C}^* \ln l }}{n(p-2^l)} $, verifies :

\noindent
Type I error probability : $\eta(\Delta_{ad}) \to 0$, for $ \mathcal{C}^* >4$.

\noindent
Type II error probability :   if 
\[
a^2n p \to + \infty \, , 2^{L^*}/p \to 0 \, , \, ln( a^2 np)/n \to 0 ~ \text{ and } \mathcal{C}^2 \geq 1 + 4 \ds\sqrt{\mathcal{C}^*}
\]
we get 
\[
\beta(\Delta_{ad}, \underset{\alpha \in A}{\cup} Q( \mathcal{T}(\alpha), \mathcal{C} \psi_\alpha) \to 0.
\]
\end{theorem}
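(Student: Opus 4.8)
The plan is to treat the two error probabilities separately, in both cases reducing the analysis to the deviations of each individual statistic $\widehat{\mathcal{A}}_{n,p,2^l}$ around its mean, for which the moment estimates of Proposition \ref{prop : properties est toeplitz} are available.

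For the type I error I would start from the union bound
$$
\eta(\Delta_{ad}) = P_I\Big(\max_{L_* \leq l \leq L^*}\mathds{1}(\widehat{\mathcal{A}}_{n,p,2^l}>t_l)=1\Big) \leq \sum_{l=L_*}^{L^*} P_I(\widehat{\mathcal{A}}_{n,p,2^l}>t_l).
$$
Under $H_0$, Proposition \ref{prop : properties est toeplitz} gives that $\widehat{\mathcal{A}}_{n,p,2^l}$ is centered with variance $a^4/(n(n-1)(p-2^l)^2)$, so that with $t_l = a^2\sqrt{\mathcal{C}^*\ln l}/(n(p-2^l))$ the threshold sits at $\approx \sqrt{\mathcal{C}^*\ln l}$ standard deviations. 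The CLT alone does not control a maximum over $\asymp \ln(a^2np)$ levels, so I would invoke a genuine non-asymptotic deviation inequality (exponential, or via a high even moment) for the degenerate $U$-statistic $\widehat{\mathcal{A}}_{n,p,2^l}$, yielding a tail of the form $P_I(\widehat{\mathcal{A}}_{n,p,2^l}>t_l) \lesssim l^{-c\,\mathcal{C}^*}$ for a constant $c$ produced by the Gaussian approximation. Since the summation starts at $L_* \to +\infty$, it suffices that $c\,\mathcal{C}^*>1$, and tracking the exact value of $c$ is precisely what pins down the requirement $\mathcal{C}^*>4$; the tail sum $\sum_{l \geq L_*} l^{-c\,\mathcal{C}^*}$ then vanishes, giving $\eta(\Delta_{ad}) \to 0$.

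For the type II error, fix $\Sigma$ in the alternative; then $\Sigma \in Q(\mathcal{T}(\alpha), \mathcal{C}\psi_\alpha)$ for at least one $\alpha \in A$. I would select the level $l=l(\alpha)$ determined by $2^{l-1} \leq \psi_\alpha^{-1/\alpha} < 2^l$, which lies in $[L_*,L^*]$ by construction and for which $m=2^l$ satisfies \eqref{propriete de m} with $\phi=\mathcal{C}\psi_\alpha$ (since then $m^\alpha\phi \asymp \mathcal{C}$). Because acceptance of $\Delta_{ad}$ forces acceptance of the single test at level $l$, Chebyshev's inequality gives
$$
P_\Sigma(\Delta_{ad}=0) \leq P_\Sigma(\widehat{\mathcal{A}}_{n,p,2^l} \leq t_l) \leq \frac{\Var_{a,\Sigma}(\widehat{\mathcal{A}}_{n,p,2^l})}{(\mathbb{E}_{a,\Sigma}(\widehat{\mathcal{A}}_{n,p,2^l}) - t_l)^2}.
$$
Using the lower bound $\mathbb{E}_{a,\Sigma}(\widehat{\mathcal{A}}_{n,p,2^l}) \geq a^4 B(\mathcal{C}\psi_\alpha)^{2+1/(2\alpha)} = a^4 B\mathcal{C}^{2+1/(2\alpha)}\sqrt{\ln\ln(a^2np)}/(a^2np)$ with $B$ from \eqref{minoration de l'espreance}, together with $t_l \approx a^2\sqrt{\mathcal{C}^*\ln\ln(a^2np)}/(np)$, both quantities are of the same order, and the condition $\mathcal{C}^2 \geq 1 + 4\sqrt{\mathcal{C}^*}$ guarantees that $\mathbb{E}_{a,\Sigma}(\widehat{\mathcal{A}}_{n,p,2^l})-t_l$ is bounded below by a fixed fraction of $\mathbb{E}_{a,\Sigma}(\widehat{\mathcal{A}}_{n,p,2^l})$. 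It then remains to insert the bounds $R_1,R_2$ of Proposition \ref{prop : properties est toeplitz} and verify, using $a^2np \to +\infty$, $2^{L^*}/p \to 0$ and $\ln(a^2np)/n \to 0$, that $\Var_{a,\Sigma}(\widehat{\mathcal{A}}_{n,p,2^l}) = o(\mathbb{E}_{a,\Sigma}^2(\widehat{\mathcal{A}}_{n,p,2^l}))$; indeed the leading variance term $a^4/(n(n-1)(p-2^l)^2)$ divided by $\mathbb{E}_{a,\Sigma}^2 \asymp a^4(\ln\ln(a^2np))/(np)^2$ is of order $1/\ln\ln(a^2np) \to 0$, and the remaining terms are shown to be of smaller order under the stated constraints. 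Since all estimates are uniform in $\alpha \in [\alpha_*,\alpha^*_{n,p}]$, the bound tends to $0$ uniformly, yielding $\beta \to 0$.

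The main obstacle is the type I step: replacing the asymptotic normality of Proposition \ref{prop : properties est toeplitz} by a sharp enough non-asymptotic tail bound for the degenerate $U$-statistic, and determining the exact exponent of $l$ it produces, so that summability over the $\asymp\ln(a^2np)$ aggregated levels holds exactly under $\mathcal{C}^*>4$ with $L_*\to+\infty$. A secondary difficulty, on the type II side, is keeping the constants in the variance analysis uniform as the upper endpoint $\alpha^*_{n,p}$ of the adaptation interval diverges, which rests on $B$ and $C(\alpha)$ remaining bounded away from $0$ as $\alpha\to+\infty$.
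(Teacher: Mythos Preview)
Your overall architecture matches the paper's proof: union bound plus a non-asymptotic tail estimate for each $\widehat{\mathcal A}_{n,p,2^l}$ on the type~I side, and selection of a single level $l_0$ followed by Chebyshev on the type~II side. Two points deserve sharpening.

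For the type~I error, the paper does not use an exponential inequality or high even moments. It writes $\widehat{\mathcal A}_{n,p,2^l}$ as a martingale $S_n=\sum_{k=2}^n Z_k$ with $Z_k=v_{n,p,l}^{-1/2}\sum_{\ell<k}H(Y_k,Y_\ell)$ and applies a Berry--Esseen type bound for martingales (Skorokhod embedding plus Lemma~3.3 of Hall--Heyde), obtaining
\[
P_I(\widehat{\mathcal A}_{n,p,2^l}>t_l)\ \le\ \Big(\tfrac{1}{t_l/\sqrt{v_{n,p,l}}}+16\varepsilon^{1/2}\Big)\exp\!\Big(-\tfrac{t_l^2}{4v_{n,p,l}}\Big)+C\varepsilon^{-2}J_n,
\]
with remainder $J_n=\sum_k\mathbb E_I Z_k^4+\mathbb E_I|V_n^2-1|^2=O(1/n)+O(2^{-l})$. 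Since $t_l^2/v_{n,p,l}\sim\mathcal C^*\ln l$, the exponential factor is $l^{-\mathcal C^*/4}$, so your constant $c$ equals $1/4$ and summability over $l\ge L_*$ needs exactly $\mathcal C^*>4$. Crucially, the remainder contributes $\sum_{l=L_*}^{L^*}O(1/n)\asymp L^*/n\asymp \ln(a^2np)/n$ and $\sum_{l\ge L_*}2^{-l}=O(2^{-L_*})$; this is where the hypothesis $\ln(a^2np)/n\to 0$ is actually used, on the type~I side rather than the type~II side.

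For the type~II error, invoking the constant $B$ from \eqref{minoration de l'espreance} is awkward because $B=K(1-D^{-2})/\sqrt 2$ inherits $\alpha$-dependence through $D,K$ once $m=2^{l_0}$ is tied to $\psi_\alpha$. The paper sidesteps your ``secondary difficulty'' by computing directly: from $2^{l_0-1}\le\psi_\alpha^{-1/\alpha}<2^{l_0}$ one gets $(2^{l_0})^{-2\alpha}<\psi_\alpha^2$ and $\sqrt{2\cdot 2^{l_0}}\le 2\psi_\alpha^{-1/(2\alpha)}$, hence $\mathbb E_{a,\Sigma}(\widehat{\mathcal A}_{n,p,2^{l_0}})\ge \tfrac{a^4}{2}(\mathcal C^2-1)\psi_\alpha^{2+1/(2\alpha)}$, while $t_{l_0}\le 2\sqrt{\mathcal C^*}\,a^4\psi_\alpha^{2+1/(2\alpha)}$. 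This yields $\mathbb E-t_{l_0}\ge \tfrac{a^4}{2}(\mathcal C^2-1-4\sqrt{\mathcal C^*})\psi_\alpha^{2+1/(2\alpha)}$, which is exactly the origin of the condition $\mathcal C^2\ge 1+4\sqrt{\mathcal C^*}$, with no $C(\alpha)$ or $B$ entering. The remaining variance analysis via $R_1,R_2$ is as you describe.
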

\begin{proof}[Proof of Theorem~\ref{theo : adaptivityToepltz}] The proof is given in section~\ref{sec:proofs}.
\end{proof}

Remark that the condition $2^{L^*}/p $ gives that $ a^2n \ll p^{2 \alpha_* - \frac 12}$ and $\ln(a^2np)/n \to 0$ implies that $a^2np \ll e^n$. Together, these conditions are mild as they give $a^2np \ll \min \{ p^{2 \alpha_* + \frac 12} , e^n \}$.

\section{Proofs} \label{sec:proofs}

\begin{proof}[Proof of Theorem \ref{theo:upperbound1}]
{ \bf  Upper bound }
We use the asymptotic normality of $\widehat{\mathcal{D}}_{n,p,m} $ to show that, the type I error probability 
\[
\eta( \Delta_m(t)) = P_{a, I} ( \Delta_m(t) = 1) =  P_{a, I} (\widehat{\mathcal{D}}_{n,p,m} > t) = \Phi\Big( - \frac{ n \ds\sqrt{p} t}{a^2}\Big) + o(1) = o(1)
\] 
as soon as $  n \ds\sqrt{p} \,t/ a^2 \to + \infty$.
In order to control the maximal type II error probability,  we use the Markov inequality to get that for all $\Sigma$ in $Q(\mathcal{F}(\alpha), \varphi)$: 
\begin{eqnarray}
 P_{a, \Sigma} ( \Delta_m(t) = 0) & =&  P_{a, \Sigma}(\widehat{\mathcal{D}}_{n,p,m} < t) \leq P_{a, \Sigma} \Big(|\widehat{\mathcal{D}}_{n,p,m} - \mathbb{E}_{a, \Sigma}(\widehat{\mathcal{D}}_{n,p,m})| < \mathbb{E}_{a, \Sigma}(\widehat{\mathcal{D}}_{n,p,m})-t \Big) \nonumber \\
 & \leq & \ds\frac{\Var_{a, \Sigma}(\widehat{\mathcal{D}}_{n,p,m})}{ (\mathbb{E}_{a, \Sigma}(\widehat{\mathcal{D}}_{n,p,m})-t)^2}  =  \ds\frac{T_1 +(n-1)T_2}{n(n-1)p^2(\mathbb{E}_{a, \Sigma}(\widehat{\mathcal{D}}_{n,p,m})-t)^2}, \nonumber
\end{eqnarray}
for $t$ properly chosen.
In order to bound the previous quantity uniformly in $\Sigma$ over  $Q(\mathcal{F}(\alpha), \varphi)$ we need to control 
$$
\inf\limits_{\Sigma \in Q(\mathcal{F}(\alpha), \varphi )} \mathbb{E}_{a,\Sigma}(\widehat{\mathcal{D}}_{n,p,m}) =  \inf\limits_{\Sigma \in Q(\mathcal{F}(\alpha), \varphi)}  \frac{a^4}{p \cdot \ds\sqrt{2m}} \ds\sum_{ \substack{1 \leq i<j \leq p \\ |i-j|  <  m}}  \sigma_{ij}^2.
$$
For all $ \Sigma \in Q(\mathcal{F}(\alpha), \varphi)$, we have
\begin{eqnarray}
\mathbb{E}_{a,\Sigma}(\widehat{\mathcal{D}}_{n,p,m}) &=&  \frac{a^4}{p \cdot \ds\sqrt{2m}} \ds\sum_{ \substack{1 \leq i<j \leq p \\ |i-j| < m }}  \sigma_{ij}^2 =  \frac{a^4}{p \cdot \ds\sqrt{2m}} \Big( \ds\sum_{ 1 \leq i<j \leq p }  \sigma_{ij}^2 -  \ds\sum_{ \substack{1 \leq i<j \leq p \\ |i-j| \geq m }}  \sigma_{ij}^2 \Big) \nonumber \\
& \geq & \frac{a^4 }{ \ds\sqrt{2m}} \Big( \varphi^2 - \frac 1p \sum_{i<j} \ds\frac{|i-j|^{2\alpha}}{m^{2 \alpha}}  \sigma_{ij}^2 \Big) \geq \frac{a^4 \varphi^2}{  \ds\sqrt{2m}} \Big( 1 - \ds\frac 1{m^{2\alpha}  \varphi^2} \Big). \nonumber 
\end{eqnarray}
We use \eqref{propriete de m} to get that, for all $ \Sigma \in Q(\mathcal{F}(\alpha), \varphi)$
\begin{equation}
\label{minoration de l'espreance}
\mathbb{E}_{a,\Sigma}(\widehat{\mathcal{D}}_{n,p,m}) \geq   \ds\frac{a^4 K}{\ds\sqrt{2}} \cdot \varphi^{2 + \frac 1{2\alpha}} \Big(1 - \frac 1{D^2} \Big)=: a^4 B \cdot\varphi^{2 + \frac 1{2\alpha}} , \text{ where } B = \ds\frac{K}{\ds\sqrt{2}} ( 1 - D^{-2}).
\end{equation}
Therefore, take $t \leq c \cdot a^4 \varphi^{2+\frac 1{2\alpha}}$ for $c < B$ and use \eqref{minoration de l'espreance}   to obtain that
\begin{eqnarray}
\ds\frac{T_1}{n(n-1)p^2(\mathbb{E}_{a, \Sigma}(\widehat{\mathcal{D}}_{n,p,m})-t)^2} & \leq & \ds\frac{1 +o(1)}{a^4n(n-1)p \varphi^{4 + \frac 1\alpha }(B-c)^2} + \ds\frac{a^2 \cdot  O(m\ds\sqrt{m})}{a^4n(n-1)p\varphi^{2 + \frac 1{2\alpha}}(1-c/B)^2} \nonumber \\
 &=& o(1) \, ,\nonumber
\end{eqnarray}
 if $ a^4n(n-1)p \varphi^{4+1/\alpha} \to + \infty $ and for all $\alpha > 1/2$. Indeed, $ a^2 \cdot m \ds\sqrt{m} \varphi^{2 + 1/(2 \alpha)} \asymp a^2 \varphi^{2 - \frac 1{\alpha}} = o(1)$.
 Similarly we show that under the previous conditions the term $ T_2/np^2(\mathbb{E}_{a, \Sigma}(\widehat{\mathcal{D}}_{n,p,m})-t)^2$ tends to 0.
 
{ \bf Lower bound }
To show the lower bound we first restrict the class $Q(\mathcal{F}(\alpha), L)$ to the class 
\[
Q := \{ \Sigma_U : [\Sigma_U]_{ij} = \mathds{1}_{(i=j)}+u_{ij}\sigma \mathds{1}_{(|1<|i - j|<T)} \text{ for all } 1 \leq i, j \leq p~,  ~U  \in \mathcal{U \}} ,
\]
where
\begin{eqnarray}
\label{sigma et T}
 \sigma = \varphi^{1 + \frac{ 1}{2 \alpha}} \, ,  \quad T \asymp \varphi^{- \frac{1}{\alpha}},  
\end{eqnarray}
and
\begin{equation*}
\mathcal{U} = \{U= [u_{ij}]_{1 \leq i ,\, j\leq p } :  u_{ii} =0 , \forall i \mbox{ and } \, u_{ij} = u_{j\, i}= \pm 1 \cdot \mathds{1}_{(|i-j| < T)}, \mbox{ for } i\neq j \},
\end{equation*}
Denote  by  
$\varepsilon_{k}=(\varepsilon_{k,1}, \ldots, \varepsilon_{k,p})^\top$ the  random vector with i.i.d. entries $\varepsilon_{k,i} \sim \mathcal{B}(a)$, for all $1 \leq k \leq n$. Moreover 
denote by $P_{\varepsilon}$ and by $P_{\varepsilon_k}$ the distributions  of $ \varepsilon=(\varepsilon_{1}, \ldots,\varepsilon_{n}) $ and of $\varepsilon_{k} $, respectively.
Recall that the observations $Y_1, \cdots, Y_n$ verify $Y_k = \varepsilon_k * X_k$  for all $1 \leq k \leq n $,  where $*$ designate the Schur product. 

 Under the null hypothesis $X_1,\ldots, X_n \overset{i.i.d.}{\sim} \mathcal{N}(0, I)$, thus  the conditional random vectors $Y_k| \varepsilon_{k}$,  are independent Gaussian vectors such that,  for all  $1 \leq k \leq n$,  ~$Y_k| \varepsilon_{k} \sim \mathcal{N}(0, I* (\varepsilon_k \varepsilon_k^\top))$.
We denote respectively by $P_{ I}$  and by $P_{I}^{(\varepsilon)}$ the distributions of   $(Y_1, \ldots, Y_n)$ and of $(Y_1, \ldots, Y_n)|(\varepsilon_{1}, \ldots, \varepsilon_{n})$ under the null hypothesis.
Under the alternative hypothesis, for $X_1,\ldots, X_n \sim \mathcal{N}(0, \Sigma_U)$, we get that the conditional random vectors $Y_k| \varepsilon_{k}$ are independent Gaussian vectors such that $Y_k| \varepsilon_{k} \sim \mathcal{N}(0, \Sigma_{U} * (\varepsilon_k \varepsilon_k^\top) )$ for all $1 \leq k \leq n$, where
\[
(\Sigma_{U} * (\varepsilon_k  \varepsilon_k^\top))_{ij}=
\begin{cases}
\varepsilon_{ik}  \quad \text{ for } i=j \\
\varepsilon_{ik} \varepsilon_{jk} \cdot \sigma \quad \text{if } 1 < |i - j | <T \\
0  \quad \text{otherwise}.
\end{cases}
\]
We denote by $P_{U}=P_{\Sigma_U}$ and  $P_{U}^{(\varepsilon)}= P_{\Sigma_U}^{(\varepsilon)}$  the  distributions of   $(Y_1, \ldots, Y_n)$ and of 
the conditional distribution $(Y_1, \ldots, Y_n)|(\varepsilon_{1}, \ldots, \varepsilon_{n})$ respectively, when $X_1,\ldots, X_n \sim \mathcal{N}(0, \Sigma_{U})$.

We define the average distribution over $Q$ by
\[
P_{ \pi} = \ds\frac{1}{2^{p(T-1)/2}} \sum_{ U \in \mathcal{U}} P_{ U} .
\]
It is known (see \cite{IngsterSuslina03}) that the minimax total error  probability  satisfies
\[
\begin{array}{lcl}
\gamma \geq  1 - \ds\frac{1}{2}
\| P_I - P_{\pi}\|_1 
\end{array}
\]
In order to prove that $\gamma \longrightarrow 1 $, we bound from above the $L_1$ distance by the Kullback-Leibler  divergence (see \cite{Tsybakov09})
\[
\| P_I - P_{\pi}\|_1^2 \leq  \frac 12 \cdot  K(P_{ I}, P_{\pi}), \quad \mbox{where } K(P_{ I}, P_{\pi}) := \mathbb{E}_{I} \log \Big( \ds\frac{dP_{I}}{dP_{ \pi}} \Big) .
\]
Therefore, to complete the proof, it is sufficient to show that $K(P_{ I}, P_{\pi}) \to 0$.
In order to prove this we use the conditional likelihoods as follows:
\begin{eqnarray}
K(P_{I}, P_{ \pi}) 
&=& 
 \mathbb{E}_{I} \log \Big( \ds\frac{d(P_\varepsilon \otimes P_{I}^{(\varepsilon)})}{d(P_\varepsilon \otimes P^{(\varepsilon)}_{\pi})} \Big) 
=  \mathbb{E}_{\varepsilon}  \mathbb{E}_{I}^{(\varepsilon)}  \log \Big( \ds\frac{dP_{I}^{(\varepsilon)}}{dP_{\pi}^{(\varepsilon)}} \Big). \nonumber
\end{eqnarray}
Let $\varepsilon(w)$ be a realization of $\varepsilon$, we denote by 
 $S_k \in \{1 , \ldots, p \}$ the support of $\varepsilon_k(w)$  that is  $\varepsilon_{k,i}(w)=1 $ if and only if $i \in S_k$. Also we denote by   $d_k = \Card(S_k)$, $\Sigma_{U}^{ \varepsilon_k(w)}$ the positive matrix $\in \mathbb{R}^{d_k \times d_k}$, defined as the sub-matrix of $\Sigma_{U}$ obtained by removing all the $i$-th rows and columns corresponding to $i \notin S_k$ and $X_{\varepsilon_k(w)}$ the sub vector of $X_k$ of dimension $d_k$ in which we retain the coordinate with indices in $S_k$. Thus,   
\begin{eqnarray}
&& L((Y_1, \ldots, Y_n )| \varepsilon(w) ) := \mathbb{E}_{I}^{(\varepsilon(w))}  \log \Big( \ds\frac{dP_{I}^{(\varepsilon(w))}}{dP_{\pi}^{( \varepsilon(w))}} (Y_1, \ldots, Y_n )\Big)  \nonumber \\
&=& \mathbb{E}_{I}^{( \varepsilon(w))}  \left( - \log \mathbb{E}_U \exp \Big( 
-  \frac 12 \sum_{k=1}^n \Big( X_{\varepsilon_k(w)}^{\top} ((\Sigma_{U}^{\varepsilon_k(w)})^{-1}-I_{\varepsilon_k}) X_{\varepsilon_k(w)} + \log \det(\Sigma_{U}^{\varepsilon_k(w)})\Big) \Big)  \right) \nonumber
\end{eqnarray}
Therefore we have
\begin{eqnarray}
& & \mathbb{E}_\varepsilon \Big( L((Y_1, \ldots, Y_n )| \varepsilon) \Big) \nonumber \\
&=& \mathbb{E}_\varepsilon \mathbb{E}_{I}^{( \varepsilon)}  \left( - \log \mathbb{E}_U \exp \Big( 
-  \frac 12 \sum_{k=1}^n \Big( X_{\varepsilon_k}^{\top} ((\Sigma_{U}^{\varepsilon_k})^{-1}-I_{\varepsilon_k}) X_{\varepsilon_k} + \log \det(\Sigma_{U}^{\varepsilon_k})\Big) \Big)  \right) \nonumber
\end{eqnarray}
Denote by 
\begin{eqnarray}
L_{n,p } 
&:=&   \log \mathbb{E}_U \exp \Big( 
-  \frac 12 \sum_{k=1}^n \Big( X_{\varepsilon_k}^{\top} ((\Sigma_{U}^{\varepsilon_k})^{-1}-I_{\kappa_k}) X_{\varepsilon_k} + \log \det(\Sigma_{U}^{\varepsilon_k})\Big) \Big)    \label{L0}
\end{eqnarray}
We define  $\Delta_{U}^{\varepsilon_k} = \Sigma_{U}^{\varepsilon_k} - I^{\varepsilon_k}$, for all $U \in \mathcal{U}$ and any realization of $\varepsilon_k$, we have $tr (\Delta_{U}^{\varepsilon_k})=0 $ and  $\| \Delta_{U}^{\varepsilon_k} \| = O(\varphi^{1 - \frac 1{2 \alpha}})$ which is $ o(1)$, as soon as $\varphi \to 0$ and $ \alpha > 1/2 $. In fact, by the Gershgorin's theorem we have
\[
\| \Delta_{U}^{\varepsilon_k} \|  \leq \max_{i \in S_k} \sum_{\substack{j \in S_k \\j \ne i}}  |(\Delta_{U}^{\varepsilon_k})_{ij}| \leq  \max_{i \in S_k} \sum_{\substack{j \in S_k \\ 1< |i-j| <T}}  |u_{ij} \sigma| \leq  2 T \cdot \sigma = O(\varphi^{1 - \frac 1{2\alpha}}).
\]
For all $x \in [- \frac 12 , + \frac 12]$ we have the following inequalities
\[
\begin{array}{lclcl}
x-x^2+x^3-2x^4  & \leq & - \Big(\ds\frac 1{1+x} -1 \Big)  & \leq & x- x^2 +x^3\nonumber \\
-x + \ds\frac{x^2}2 - \frac{x^3}{3} & \leq &  - \log(1+x) & \leq & -x + \ds\frac{x^2}{2} - \frac{x^3}{3} + \frac{x^4}{2}.
\end{array}
\]
Applying these inequalities to the eigenvalues of $\Delta_{U}^{\varepsilon_k} $ we get
\begin{eqnarray}
\Delta_{U}^{\varepsilon_k} - ( \Delta_{U}^{\varepsilon_k} )^2  +  ( \Delta_{U}^{\varepsilon_k} )^3  - 2  ( \Delta_{U}^{\varepsilon_k} )^4 \leq  - ( (\Sigma_{U}^{\varepsilon_k})^{-1} - I^{\varepsilon_k})   & \leq &  \Delta_{U}^{\varepsilon_k} - ( \Delta_{U}^{\varepsilon_k} )^2  +  ( \Delta_{U}^{\varepsilon_k} )^3   \nonumber 
 \label{taylor1} 
\end{eqnarray}
\begin{eqnarray}
  \ds\frac{1}{2}tr (\Delta_{U}^{\varepsilon_k})^2  - \ds\frac{1}{3}  tr (\Delta_{U}^{\varepsilon_k} )^3  \leq - \log \det(\Sigma_{U}^{\varepsilon_k}) \leq   \ds\frac{1}{2}tr (\Delta_{U}^{\varepsilon_k})^2  - \ds\frac{1}{3}  tr (\Delta_{U}^{\varepsilon_k} )^3   + \frac 12 tr (\Delta_{U}^{\varepsilon_k} )^4 \, ,
 \nonumber \label{taylor2}
\end{eqnarray}
for $\varphi$ small enough such that $\|  \Delta_U^{\varepsilon_k}\| \leq 1/2 $.
Thus we can bound $L_{n,p}$,  $\underline{L}_{n,p} \leq  L_{n,p} \leq \bar{L}_{n,p}$ where 
\begin{eqnarray}
\underline{L}_{n,p} &:= &  \log \mathbb{E}_U \exp \Big( \ds\frac 12 \sum_{k=1}^n X_{\varepsilon_k}^{\top} (\Delta_{U}^{\varepsilon_k}  - ( \Delta_{U}^{\varepsilon_k} )^2  +  ( \Delta_{U}^{\varepsilon_k} )^3  -2  ( \Delta_{U}^{\varepsilon_k} )^4 ) X_{\varepsilon_k}  \nonumber \\
&&  \hspace{2 cm}
+  \ds\frac{1}{2} \sum_{k=1}^n \Big( \frac 12 tr (\Delta_{U}^{\varepsilon_k})^2  - \ds\frac{1}{3} tr (\Delta_{U}^{\varepsilon_k} )^3 \Big) \Big) \, , \text{ and } \nonumber \\
\bar{L}_{n,p} &:=& \log \mathbb{E}_U \exp \Big( \ds\frac 12 \sum_{k=1}^n X_{\varepsilon_k}^{\top} (\Delta_{U}^{\varepsilon_k} - ( \Delta_{U}^{\varepsilon_k} )^2 + ( \Delta_{U}^{\varepsilon_k} )^3 ) X_{\varepsilon_k} \nonumber \\
&&   \hspace{2 cm}  + \ds\frac{1}{2} \sum_{k=1}^n  \Big( \frac 12 tr (\Delta_{U}^{\varepsilon_k})^2  - \ds\frac{1}{3}  tr (\Delta_{U}^{\varepsilon_k} )^3   +  \frac 12  tr (\Delta_{U}^{\varepsilon_k} )^4  \Big) \Big). \nonumber 
\end{eqnarray}
Now we develop the terms of  $ \bar{L}_{n,p}$ 
\begin{eqnarray*}
tr (\Delta_{U}^{\varepsilon_k})^2 = 2 \sigma^2 \ds\sum_{\substack{i<j \\ 1 < |i-j| < T}} \varepsilon_{k,i} \varepsilon_{k,j},  \quad  \quad  tr (\Delta_{U}^{\varepsilon_k})^3 =  3! \sigma^3 \hspace{- 0.5 cm}\sum_{ \substack{ i<i_1 < i_2  \\ 1 < |i-i_1|, |i_1-i_2|, |i_2 -i|<T}}  \hspace{- 0.5 cm} u_{ii_1}u_{i_1 i_2} u_{i_2 i}    \varepsilon_{k,i} \varepsilon_{k,i_1} \varepsilon_{k,i_2}.
\end{eqnarray*}
and 
\begin{eqnarray*}
tr(\Delta_{U}^{\varepsilon_k})^4 &=& \sigma^4 \sum_{ \substack{i, i_1, i_2, i_3  \\ 1 < |i-i_1|, \ldots, |i_3 -i|<T}} u_{ii_1}u_{i_1 i_2} u_{i_2 i_3} u_{i_3 i} \varepsilon_{k,i} \varepsilon_{k, i_1}  \varepsilon_{k, i_2} \varepsilon_{k,i_3} \nonumber \\
&=& \sigma^4 \sum_{  1 < |i-i_1|< T}  \varepsilon_{k,i} \varepsilon_{k, i_1} +  2 \sigma^4 \sum_{ \substack{i, i_1, i_2 \\ 1 < |i-i_1|, |i_1 -i_2|<T}} \varepsilon_{k,i} \varepsilon_{k, i_1}  \varepsilon_{k, i_2}  \nonumber \\
&+& 4! \,\, \sigma^4 \sum_{ \substack{i < i_1 < i_2 < i_3  \\ 1 < |i-i_1|, \ldots, |i_3 -i|<T}} u_{ii_1}u_{i_1 i_2} u_{i_2 i_3} u_{i_3 i} \varepsilon_{k,i} \varepsilon_{k, i_1}  \varepsilon_{k, i_2} \varepsilon_{k,i_3} 
\end{eqnarray*}
Moreover, we have (recall that $u_{ij}^2 =1$ and $\varepsilon_{ij}^2 = \varepsilon_{ij}$)
\begin{eqnarray}
\sum_{k=1}^n X_{\varepsilon_k}^\top  \Delta_{U}^{\varepsilon_k} X_{\varepsilon_k} &=& 2 \sigma \cdot  \sum_{\substack{ i < j \\ 1 < |i-j| <T}}  u_{ij}  \sum_{k=1}^n \varepsilon_{k,i} \varepsilon_{k,j} X_{k,i} X_{k,j} \nonumber \\
\sum_{k=1}^n X_{\varepsilon_k}^\top  (\Delta_{U}^{\varepsilon_k})^2 X_{\varepsilon_k} &=& \sigma^2    \ds\sum_{k=1}^n \sum_{i,j}   X_{\varepsilon_{k,i}}  X_{\varepsilon_{k,j}} \sum_{ \substack{ i_1 \\ 1 < |i_1 - i|, |i_1 -j| < T}} u_{i i_1 } u_{i_1 j} \varepsilon_{k,i} \varepsilon_{k,i_1} \varepsilon_{k,j} \nonumber \\ 
&=& \sigma^2    \ds\sum_{k=1}^n \sum_{i}   X_{\varepsilon_{k,i}}^2  \sum_{ \substack{ i_1 \\ 1 < |i_1 - i|< T}} \varepsilon_{k,i} \varepsilon_{k,i_1}  \nonumber \\
&+& 2 \sigma^2    \ds\sum_{k=1}^n \sum_{i < j}   X_{\varepsilon_{k,i}}  X_{\varepsilon_{k,j}} \sum_{ \substack{ i_1 \\ 1 < |i_1 - i|, |i_1 -j| < T}} u_{i i_1 } u_{i_1 j} \varepsilon_{k,i} \varepsilon_{k,i_1} \varepsilon_{k,j} \, , \nonumber 
\end{eqnarray}
and
\begin{eqnarray}
 \ds\sum_{k=1}^n    X_{\varepsilon_k}^{\top} (\Delta_{U}^{\varepsilon_k} )^3  X_{\varepsilon_k} 
& =& \sigma^3   \ds\sum_{k=1}^n \sum_{i,j}   X_{\varepsilon_{k,i}}  X_{\varepsilon_{k,j}} \sum_{ \substack{i_1 , i_2 \\ 1 < |i-i_1|, |i_1-i_2|, |i_2 -j|<T}} u_{ii_1}u_{i_1 i_2} u_{i_2 j}  \varepsilon_{k,i} \varepsilon_{k,i_1} \varepsilon_{k,i_2} \varepsilon_{k,j} \nonumber  \\
&=& 2  \sigma^3  \ds\sum_{k=1}^n \sum_{i } X_{\varepsilon_{k,i}}^2 \sum_{\substack{ i_1 < i_2 \\  1 < |i-i_1|, |i_1-i_2|, |i_2 -i|<T}} u_{ii_1}u_{i_1 i_2} u_{i_2 i}  \varepsilon_{k,i} \varepsilon_{k,i_1} \varepsilon_{k,i_2} \nonumber \\
&+& 2\sigma^3  \ds\sum_{k=1}^n \sum_{ \substack{ i < j \\ 1< |i-j| < T }}  X_{\varepsilon_{k,i}}  X_{\varepsilon_{k,j}} \Big(u_{ij}^3 \varepsilon_{k,i}  \varepsilon_{k,j} +  2 \sum_{\substack{ i_1 \\ 1 < |i_1 -i | <T} }   u_{ij}  \varepsilon_{k,i} \varepsilon_{k,i_1} \varepsilon_{k,j}  \Big)  \nonumber \\
&+&   2\sigma^3  \ds\sum_{k=1}^n \sum_{ i < j}  X_{\varepsilon_{k,i}}  X_{\varepsilon_{k,j}} \sum_{ \substack{ j \ne i_1 \ne i_2 \ne i \\ 1 < |i-i_1|, |i_1-i_2|, |i_2 -j|<T}} u_{ii_1}u_{i_1 i_2} u_{i_2 j}  \varepsilon_{k,i} \varepsilon_{k,i_1} \varepsilon_{k,i_2} \varepsilon_{k,j} \,  \nonumber 
\end{eqnarray}
In consequence, $\bar{L}_{n,p}$ can be written as follows:
\begin{eqnarray}
\bar{L}_{n,p} &=& \log \mathbb{E}_U \exp \Big\{ \sum_{\substack{ i < j \\ 1 < |i-j| <T}}  u_{ij}  \sum_{k=1}^n \varepsilon_{k,i} \varepsilon_{k,j} X_{k,i} X_{k,j}   \Big(  \sigma +  \sigma^3(1 + 2\sum_{\substack{ i_1 \\ 1 < |i_1 -i | <T} }    \varepsilon_{k,i_1}) \Big) \nonumber \\\nonumber \\
&- &    \sum_{ \substack{ i,i_1,j \\ i<j \\ 1 < |i_1 - i|, |i_1 -j| < T}} u_{i i_1 } u_{i_1 j} \ds\sum_{k=1}^n   X_{\varepsilon_{k,i}}  X_{\varepsilon_{k,j}} \varepsilon_{k,i} \varepsilon_{k,i_1} \varepsilon_{k,j} \, \sigma^2  \nonumber \\
&+&      \sum_{\substack{i< i_1 < i_2  \\  1 < |i-i_1|, |i_1-i_2|, |i_2 -i|<T}} u_{ii_1}u_{i_1 i_2} u_{i_2 i} \ds\sum_{k=1}^n \varepsilon_{k,i} \varepsilon_{k,i_1} \varepsilon_{k,i_2} \sigma^3 \Big( 3  X_{\varepsilon_{k,i}}^2  - 1 \Big) \nonumber \\
&+&   \sum_{ \substack{i,,i_1, i_2,j \\ i<j \\ j \ne i_1 \ne i_2 \ne i \\ 1 < |i-i_1|, |i_1-i_2|, |i_2 -j|<T}} u_{ii_1}u_{i_1 i_2} u_{i_2 j}  \ds\sum_{k=1}^n   X_{\varepsilon_{k,i}}  X_{\varepsilon_{k,j}} \varepsilon_{k,i} \varepsilon_{k,i_1} \varepsilon_{k,i_2} \varepsilon_{k,j} \, \sigma^3 \nonumber \\
&+& 6  \sum_{ \substack{i < i_1 < i_2 < i_3  \\ 1 < |i-i_1|, \ldots, |i_3 -i|<T}} u_{ii_1}u_{i_1 i_2} u_{i_2 i_3} u_{i_3 i} \sum_{k=1}^n \varepsilon_{k,i} \varepsilon_{k, i_1}  \varepsilon_{k, i_2} \varepsilon_{k,i_3} \, \sigma^4
 \, \Big\} \nonumber \\
& +& \ds\frac{\sigma^2}4 \ds\sum_{\substack{i<j \\ 1 < |i-j| < T}} \sum_{k=1}^n \varepsilon_{k,i} \varepsilon_{k,j} 
 -   \ds\frac{\sigma^2}2  \sum_{ \substack{i < i_1 \\ 1 < |i_1 - i|< T}}  \ds\sum_{k=1}^n   X_{\varepsilon_{k,i}}^2\varepsilon_{k,i} \varepsilon_{k,i_1} \nonumber \\
&+& \frac{\sigma^4}4 \sum_{ \substack{i, i_1 \\ 1 < |i-i_1|< T}}  \sum_{k=1}^n \varepsilon_{k,i} \varepsilon_{k, i_1}
+ \frac{\sigma^4}2  \sum_{ \substack{i, i_1, i_2 \\ 1 < |i-i_1|, \ldots, |i_2 -i|<T}} \sum_{k=1}^n \varepsilon_{k,i} \varepsilon_{k, i_1}  \varepsilon_{k, i_2}  \nonumber 
\end{eqnarray}
We have that $\{ u_{ij} \}_{\substack{1 \leq i < j \leq p}}$ is a sequence of i.i.d Rademacher random variables. Note that sequences composed of finite products of i.i.d Rademacher random variables, for example the sequences  $\{ u_{ir} u_{rj} \}_{\substack{1 < i \ne r \ne j  \leq p \\ i<j }} $ and $ \{ u_{ir} u_{rs} u_{sj} \}_{ \substack{1 \leq i \ne r \ne s \ne j \leq p \\ i <j }}$, form sequences of i.i.d Rademacher random variables. Moreover they are mutually  independent  and 	 independent from the initial sequence $\{ u_{ij} \}_{i<j}$. 
Now we explicit in $\bar{L}_{n,p}$ the expected value with respect to the i.i.d Rademacher  random variables and get

\begin{eqnarray}
\bar{L}_{n,p} &=&  \sum_{\substack{ i < j \\ 1 < |i-j| <T}}   \log \cosh \Big( \sum_{k=1}^n \varepsilon_{k,i} \varepsilon_{k,j} X_{k,i} X_{k,j} \Big(  \sigma +  \sigma^3(1 + 2\sum_{\substack{ i_1 \\ 1 < |i_1 -i | <T} }    \varepsilon_{k,i_1}) \Big) \Big) \nonumber \\
& +& \ds\frac{\sigma^2}2 \ds\sum_{\substack{i<j \\ 1 < |i-j| < T}} \sum_{k=1}^n \varepsilon_{k,i} \varepsilon_{k,j} 
- \sigma^2 \sum_{ \substack{i < i_1 \\ 1 < |i_1 - i|< T}}  \ds\sum_{k=1}^n   X_{\varepsilon_{k,i}}^2\varepsilon_{k,i} \varepsilon_{k,i_1} \nonumber \\
&+&  \sum_{ \substack{ i,i_1,j \\ i<j \\ 1 < |i_1 - i|, |i_1 -j| < T}}   \log \cosh \Big(  \ds\sum_{k=1}^n   X_{\varepsilon_{k,i}}  X_{\varepsilon_{k,j}} \varepsilon_{k,i} \varepsilon_{k,i_1} \varepsilon_{k,j} \, \sigma^2 \Big) \nonumber \\
&+&   \sum_{\substack{i< i_1 < i_2  \\  1 < |i-i_1|, |i_1-i_2|, |i_2 -i|<T}}  \log \cosh \Big( 3 \sigma^3 \ds\sum_{k=1}^n X_{\varepsilon_{k,i}}^2 \varepsilon_{k,i} \varepsilon_{k,i_1} \varepsilon_{k,i_2} -  \sigma^3  \Big)   \nonumber \\
&+&   \sum_{ \substack{i,i_1, i_2,j \\ i<j \\ j \ne i_1 \ne i_2 \ne i \\ 1 < |i-i_1|, |i_1-i_2|, |i_2 -j|<T}} \log \cosh \Big(   \ds\sum_{k=1}^n   X_{\varepsilon_{k,i}}  X_{\varepsilon_{k,j}} \varepsilon_{k,i} \varepsilon_{k,i_1} \varepsilon_{k,i_2} \varepsilon_{k,j} \, \sigma^3 \Big) \nonumber \\ 
&+&  \sum_{ \substack{i < i_1 < i_2 < i_3  \\ 1 < |i-i_1|, \ldots, |i_3 -i|<T}} \log \cosh \Big( 6 \sum_{k=1}^n \varepsilon_{k,i} \varepsilon_{k, i_1}  \varepsilon_{k, i_2} \varepsilon_{k,i_3} \, \sigma^4 \Big) + \frac{\sigma^4}4 \sum_{ \substack{i, i_1 \\ 1 < |i-i_1|< T}} \sum_{k=1}^n \varepsilon_{k,i} \varepsilon_{k, i_1} \nonumber \\
& + & \frac{\sigma^4}2  \sum_{ \substack{i, i_1, i_2 \\ 1 < |i-i_1|, \ldots, |i_2 -i|<T}} \sum_{k=1}^n \varepsilon_{k,i} \varepsilon_{k, i_1}  \varepsilon_{k, i_2} . \nonumber 
\end{eqnarray}
We use the inequality $ \ds\frac{x^2}2 - \frac{x^4}{12} \leq \log \cosh(x) \leq \frac{x^2}{2}$ for all $x \in \mathbb{R}$. Thus,  
\begin{eqnarray}
\bar{L}_{n,p,1} &:= & \sum_{\substack{ i < j \\ 1 < |i-j| <T}}   \log \cosh \Big( \sum_{k=1}^n \varepsilon_{k,i} \varepsilon_{k,j} X_{k,i} X_{k,j} \Big(  \sigma +  \sigma^3(1 + 2\sum_{\substack{ i_1 \\ 1 < |i_1 -i | <T} }    \varepsilon_{k,i_1}) \Big) \Big) \nonumber \\
&+&  \ds\frac{\sigma^2}2 \ds\sum_{\substack{i<j \\ 1 < |i-j| < T}} \sum_{k=1}^n \varepsilon_{k,i} \varepsilon_{k,j} -  \sigma^2  \sum_{ \substack{i < i_1 \\ 1 < |i_1 - i|< T}}  \ds\sum_{k=1}^n   X_{\varepsilon_{k,i}}^2\varepsilon_{k,i} \varepsilon_{k,i_1} \nonumber \\
& \leq & \ds\frac 12   \sum_{\substack{ i < j \\ 1 < |i-j| <T}}  \Big( \sum_{k=1}^n \varepsilon_{k,i} \varepsilon_{k,j} X_{k,i} X_{k,j} \Big(  \sigma +  \sigma^3(1 + 2 T ) \Big)^2 \nonumber \\
&+&  \ds\frac{\sigma^2}2 \ds\sum_{\substack{i<j \\ 1 < |i-j| < T}} \sum_{k=1}^n \varepsilon_{k,i} \varepsilon_{k,j} -  \sigma^2  \sum_{ \substack{i < i_1 \\ 1 < |i_1 - i|< T}}  \ds\sum_{k=1}^n   X_{\varepsilon_{k,i}}^2\varepsilon_{k,i} \varepsilon_{k,i_1} .
\end{eqnarray}
Therefore, 
\begin{eqnarray}
\mathbb{E}_\varepsilon \mathbb{E}_{I}^{( \varepsilon)} \Big( \bar{L}_{n,p,1} \Big) & \leq &  \ds\frac{na^2}2  \sum_{\substack{ i < j \\ 1 < |i-j| <T}}   \Big(  \sigma +  \sigma^3(1 + 2T ) \Big)^2 +  \ds\frac{ \, \sigma^2 }2 \sum_{\substack{ i < j \\ 1 < |i-j| <T}}  na^2  -   \sigma^2  \sum_{ \substack{i < i_1 \\ 1 < |i_1 - i|< T}} na^2 \nonumber \\\
& = & \ds\frac{na^2}2  \sum_{\substack{ i < j \\ 1 < |i-j| <T}}  \Big( 2 \sigma^4(1 +2T) + \sigma^6 (1 +2T)^2 \Big) = O(na^2 pT^2 \sigma^4 ) + O(na^2 pT^3 \sigma^6) \nonumber \\
&=& O(a^2 n p \varphi^{4} ) + O( a^2 np \varphi^6) = O \Big( (a^2 n)^{ \frac{- 4 \alpha +1}{4 \alpha +1}} p^{\frac 1{4 \alpha +1}} \Big) = o(1) \,  , 
\end{eqnarray}
as soon as $p = o(1) (a^2 n)^{4 \alpha -1}$. Similarly, we show that 
\begin{eqnarray}
\mathbb{E}_\varepsilon \mathbb{E}_{I}^{( \varepsilon)} \Big( \bar{L}_{n,p,1} \Big) & \geq &
- \ds\frac 1{12} \, \mathbb{E}_\varepsilon \mathbb{E}_{I}^{( \varepsilon)}  \sum_{\substack{ i < j \\ 1 < |i-j| <T}}  \Big( \sum_{k=1}^n \varepsilon_{k,i} \varepsilon_{k,j} X_{k,i} X_{k,j} \Big(  \sigma +  \sigma^3(1 + 2 \sum_{\substack{ i_1 \\ 1 < |i_1 -i | <T} } \varepsilon_{k,i}) \Big) \Big)^4  + o(1) \nonumber \\
&= & - \ds\frac{a^2n}{12}  \cdot  \mathbb{E}_{I}^{( \varepsilon_1)} (X_{k,i}^4 X_{k,j}^4 ) \sum_{\substack{ i < j \\ 1 < |i-j| <T}} \Big(  \sigma +  \sigma^3(1 + 2T ) \Big)^4  +  o(1)\nonumber \\
& -&  \ds\frac{a^4 n(n-1) }{12} \cdot 3  \mathbb{E}_{I}^{( \varepsilon_1)} (X_{1,i}^2  X_{1,j}^2 )  \mathbb{E}_{I}^{( \varepsilon_2)} (X_{2,i}^2  X_{2,j}^2 ) \sum_{\substack{ i < j \\ 1 < |i-j| <T}} \Big(  \sigma +  \sigma^3(1 + 2 T) \Big)^4 \nonumber.
\end{eqnarray}
See that the first term was already bounded from above in the previous display and that 
\begin{eqnarray*}
&& \ds\frac{a^4 n(n-1) }{12} \cdot 3  \mathbb{E}_{I}^{( \varepsilon_1)} (X_{1,i}^2  X_{1,j}^2 )  \mathbb{E}_{I}^{( \varepsilon_2)} (X_{2,i}^2  X_{2,j}^2 ) \sum_{\substack{ i < j \\ 1 < |i-j| <T}} \Big(  \sigma +  \sigma^3(1 + 2 T) \Big)^4 \nonumber \\
&\leq &  O(a^4 n^2 ) \cdot \Big( pT \sigma^4 + pT^2 \sigma^6  +  pT^3 \sigma^8 + pT^4 \sigma^{10} + pT^{5} \sigma^{12} \Big) = O(a^4 n^2 p \varphi^{4 + \frac 1{\alpha}}) = o(1) , 
\end{eqnarray*}
as soon  as $a^4 n^2 p \varphi^{4 + \frac 1{\alpha}} \to 0$ and $\alpha >1/2$.
We deduce that $\mathbb{E}_\varepsilon \mathbb{E}_{I}^{( \varepsilon)} \Big( \bar{L}_{n,p,1} \Big) \geq o(1)$. As consequence $\mathbb{E}_\varepsilon \mathbb{E}_{I}^{( \varepsilon)} \Big( \bar{L}_{n,p,1} \Big) = o(1)$. Now we treat the second term of $\bar{L}_{n,p}$:
\begin{eqnarray}
\bar{L}_{n,p, 2} &:= & \sum_{ \substack{ i,i_1,j \\ i<j \\ 1 < |i_1 - i|, |i_1 -j| < T}} \log \cosh \Big(  \ds\sum_{k=1}^n   X_{\varepsilon_{k,i}}  X_{\varepsilon_{k,j}} \varepsilon_{k,i} \varepsilon_{k,i_1} \varepsilon_{k,j} \, \sigma^2 \Big) \nonumber \\
& \leq & \sum_{ \substack{ i,i_1,j \\ i<j \\ 1 < |i_1 - i|, |i_1 -j| < T}} \Big(  \ds\sum_{k=1}^n   X_{\varepsilon_{k,i}}  X_{\varepsilon_{k,j}} \varepsilon_{k,i} \varepsilon_{k,i_1} \varepsilon_{k,j} \, \sigma^2 \Big) ^2 . \nonumber 
\end{eqnarray}
So, 
\begin{eqnarray*}
\mathbb{E}_\varepsilon \mathbb{E}_{I}^{( \varepsilon)} \Big( \bar{L}_{n,p, 2} \Big) &\leq &  \mathbb{E}_\varepsilon \mathbb{E}_{I}^{( \varepsilon)}  \sum_{ \substack{ i,i_1,j \\ i<j \\ 1 < |i_1 - i|, |i_1 -j| < T}} \Big(  \ds\sum_{k=1}^n   X_{\varepsilon_{k,i}}  X_{\varepsilon_{k,j}} \varepsilon_{k,i} \varepsilon_{k,i_1} \varepsilon_{k,j} \, \sigma^2 \Big) ^2 \nonumber \\
&=& \sum_{ \substack{ i,i_1,j \\ i<j \\ 1 < |i_1 - i|, |i_1 -j| < T}} \ na^3 \sigma^4 \leq a^3 n pT \sigma^4 = O(a^3 n p \varphi^{4 + \frac 1{\alpha}}) =o(1).
\end{eqnarray*}
Using the bound from below of $\log \cosh$ inequality, we show that $ \mathbb{E}_\varepsilon \mathbb{E}_{I}^{( \varepsilon)} \Big( \bar{L}_{n,p, 2} \Big)$  is bounded from below by a quantity that tends to zero. Therefore we get $\mathbb{E}_\varepsilon \mathbb{E}_{I}^{( \varepsilon)} \Big( \bar{L}_{n,p, 2} \Big) =o(1)$. In a similar way we show that the expected value of the remaining terms with $\log \cosh$  in $\bar{L}_{n,p}$ tend to~0.  Finally we have
\begin{eqnarray}
&& \mathbb{E}_\varepsilon \mathbb{E}_{I}^{( \varepsilon)} \Big(  \frac{\sigma^4}4 \sum_{ \substack{i, i_1 \\ 1 < |i-i_1|< T}} \sum_{k=1}^n \varepsilon_{k,i} \varepsilon_{k, i_1} +  \frac{\sigma^4}2  \sum_{ \substack{i, i_1, i_2 \\ 1 < |i-i_1|, \ldots, |i_2 -i|<T}} \sum_{k=1}^n \varepsilon_{k,i} \varepsilon_{k, i_1}  \varepsilon_{k, i_2} \Big) \nonumber \\
&=& O(a^2 \sigma^4 pTn ) +O( a^3\sigma^4 p T^2 n) =O(a^2 np \varphi^{4 + \frac 1\alpha} ) + O(a^3n p \varphi^4) = o(1), 
\end{eqnarray}
under the previous conditions.
As consequence  if $ p = o(1) (a^2 n)^{4 \alpha -1} $ and if $ a^4 n^2 p \varphi^{4 + \frac 1{\alpha}} \to 0$, then 
\begin{eqnarray}
\mathbb{E}_\varepsilon \mathbb{E}_{I}^{( \varepsilon)} \Big( - \bar{L}_{n,p} \Big) = o(1) \, .
\end{eqnarray}
To achieve the proof, we show in a similar way that $\mathbb{E}_\varepsilon \mathbb{E}_{I}^{( \varepsilon)} \Big( - \underline{L}_{n,p} \Big) = o(1)$ .
 \end{proof}


\noindent \begin{proof}[Proof of Theorem \ref{theo : adaptivityToepltz}]
To control the type I error probability,  we  derive an inequality of Berry-Essen type for $\mathcal{A}_{n,p, 2^l} $. For any fixed $l$ in $\mathbb{N}^*$ we denote by $v_{n,p,l} := \Var_{a, I}(\mathcal{A}_{n,p, 2^l} )$, which gives $v_{n,p,l} \sim a^4/(n^2 (p - 2^l)^2)$ by Proposition~\ref{prop : properties est toeplitz}. Next, we rewrite $\mathcal{A}_{n,p, 2^l}$ as follows :
\[
\mathcal{A}_{n,p, 2^l} = \underset{1 \leq k < \ell \leq n}{\ds\sum }H(Y_k, Y_\ell) 
\]
where,
\[
H(Y_k, Y_\ell) = \ds\frac{\ds\sqrt{2}}{n(n-1)(p-2^l)^2} \cdot \ds\frac 1{\ds\sqrt{ 2^l}}  \sum_{j=1}^{2^l}   ~ \underset{2^l+1 \leq i_1, i_2 \leq p}{\ds\sum } Y_{k, i_1} Y_{k, i_1-j} Y_{\ell, i_2} Y_{\ell, i_2-j}.
\]
 For $2 \leq k, h \leq n$,  define
\[
Z_k = \ds\frac{1}{\ds\sqrt{v_{n,p,l}}} \sum_{\ell=1}^{k-1} H(Y_k, Y_\ell) ,  \quad \text{and} \quad S_h = \sum_{k=2}^h Z_k  \,.
\]
Remark that $\{S_h \}_{h \geq 2}$ is a centered martingale with respect to the filtration $\{\mathcal{F}_h \}_{h \geq 2}$ where  $\mathcal{F}_h$ is the $\sigma$-field generated by the random vectors $\{ X_1, \dots, X_h \}$. Note that $ \mathcal{A}_{n,p, 2^l}  = \ds\sqrt{v_{n,p,l}} \cdot S_n$ and let 
$ V_n^2 = \sum_{k=2}^n \mathbb{E}_{a, I}(Z_k^2/ \mathcal{F}_{k-1}) $.
 We  fix $0 < \delta \leq 1$ and define
\[
J_n = \sum_{k=2}^n \mathbb{E}_{a,I}( Z_k)^{2 + 2 \delta} + \mathbb{E}_{a,I} | V_n^2 -1|^{1 + \delta} .
\] 
We use the Skorokhod representation and  Lemma 3.3 in \citep{HallHeyde} to obtain that, for any $0 < \varepsilon<1/2$ and any $x \in \mathbb{R}$, there exists a positive constant $C$ depending only on $\delta$ such that
\begin{eqnarray*}
|P_{a, I}(\mathcal{A}_{n,p, 2^l} \leq x) - \Phi \Big( \ds\frac x{\ds\sqrt{v_{n,p,l}}} \Big) | &=& \Big| P_{a, I} \Big(S_n \leq \ds\frac x{ \ds\sqrt{v_{n,p,l}}} \Big) - \Phi \Big( \ds\frac x{\ds\sqrt{v_{n,p,l}}} \Big) \Big| \\
& \leq &16 \varepsilon^{1/2} \exp \Big( - \ds\frac{x^2}{4 v_{n,p,l}} \Big)  + C \cdot \varepsilon^{-1- \delta} J_n.
\end{eqnarray*}
Then using that $ 1 - \Phi(u) \leq (1/u) \exp( -u^2/2)$ for all $u > 0$, we obtain
\begin{eqnarray}
 P_{a, I} (\mathcal{A}_{n,p, 2^l} > x) & \leq & \Big(1 -\Phi \Big( \ds\frac x{\ds\sqrt{v_{n,p,l}}} \Big) \Big)  + 16 \varepsilon^{1/2} \exp \Big( - \ds\frac{x^2}{4 v_{n,p,l}} \Big)  + C \cdot \varepsilon^{-1- \delta} J_n  \nonumber \\
 &\leq & \Big(  \ds\frac{1}{x/\ds\sqrt{v_{n,p,l}}} + 16 \varepsilon^{1/2} \Big)  \exp \Big( - \ds\frac{x^2}{4 v_{n,p,l}} \Big)  + C \cdot \varepsilon^{-1- \delta} J_n . \label{BerryEssen}
\end{eqnarray}
Choose $\delta =1$, then
\[
J_n = \sum_{k=2}^n \mathbb{E}_{a,I}( Z_k)^4 + \mathbb{E}_{a,I} | V_n^2 -1|^2.
\]
We can show that 
\begin{equation}
\label{J_nbound}
\sum_{k=2}^n \mathbb{E}_{a,I}( Z_k)^4  = O \Big( \ds\frac{1}{n} \Big) \quad \text{and} \quad \mathbb{E}_{a,I} | V_n^2 -1|^2 =O \Big( \ds\frac{1}{n} \Big) + O \Big( \ds\frac{1}{2^l} \Big)
\end{equation}
 Take $t_l = a^2\ds\frac{ \ds\sqrt{\mathcal{C}^* \ln l}}{n(p-2^l)}$, we use \eqref{BerryEssen} and \eqref{J_nbound} to bound from above the type I error probability:
\begin{eqnarray*}
P_{a, I}( \Delta_{ad} =1 ) &=& P_{a, I}(\exists l \in \mathbb{N}, L_* \leq  l \leq L^* \, ; \mathcal{A}_{n,p, 2^l} > t_l) \leq \ds\sum_{L_* \leq  l \leq L^*} P_{a, I}( \mathcal{A}_{n,p, 2^l}  > t_l) \nonumber \\
& \leq &  \ds\sum_{L_* \leq  l \leq L^*} \left( \Big( \frac {a^2}{n(p-2^l) t_l} + 16 \varepsilon^{1/2} \Big) \exp \Big( -  \ds\frac{t_l^2}{4 v_{n,p}}  \Big) + \frac{O(1)}{\varepsilon^{2}} \Big(\ds\frac 1n +  \frac 1{2^l}\Big) \right) \nonumber \\
& \leq &  \sum_{l \geq L_*} \Big( \ds\frac 1{ \ds\sqrt{\mathcal{C}^*\ln l}} + 16 \varepsilon^{1/2} \Big) \exp \Big( - \ds\frac{ \mathcal{C}^* \ln l}{4} \Big) + O(1) \ds\frac{L^* -L_*}{n \varepsilon^2} + \frac{O(1)}{\varepsilon^2} \sum_{l \geq L_*} \ds\frac{1}{2^l} 
 \nonumber \\
& \leq & \sum_{l \geq L_*} \Big( \ds\frac{1}{  \ds\sqrt{\mathcal{C}^* \ln l}} + 16 \varepsilon^{1/2} \Big)l^{-\mathcal{C}^*/4} + + O(1) \ds\frac{L^* }{n \varepsilon^2} + \frac{O(1) 2^{-L_*}}{\varepsilon^2} = o(1),
\end{eqnarray*}
 for $\mathcal{C}^* > 4$ and since $ L_*$ and $L^*$ both tend to infinity, such that $\ln(a^2 n \sqrt{p})/n $ tends to 0.

Now, we control the type II error probability. Assume that $\Sigma \in \mathcal{T}(\alpha)$ and that $\alpha$ is such that  there exists $l_0 \in \{L_*, \ldots,L^*\}$ such that $2^{l_0 -1} \leq    (\psi_\alpha)^{- \frac 1{\alpha}}  < 2^{l_0} $, thus  
\begin{eqnarray}
\mathbb{E}_{a,\Sigma}(\widehat{\mathcal{D}}_{n,p,2^{l_0}}) &=&    \frac{a^4}{ \ds\sqrt{2 \cdot 2^{l_0}}} \Big( \ds\sum_{ 1 \leq j < p }  \sigma_{j}^2 -  \ds\sum_{ 2^{l_0}<j < p }  \sigma_{j}^2 \Big) \nonumber \\
& \geq & \frac{a^4 }{2 (\psi_\alpha)^{- \frac 1{ 2 \alpha}} } \Big( \mathcal{C}^2 \psi_\alpha^2 - \sum_{j} \ds\frac{ j^{2\alpha}}{(2^{l_0})^{2 \alpha}}  \sigma_{j}^2 \Big) \geq  (\psi_\alpha)^{ 2 + \frac 1{2\alpha}} \cdot \frac{a^4}{ 2} \Big( \mathcal{C}^2 -1 \Big). \nonumber 
\end{eqnarray}
We assumed that $a^2 np (\psi_\alpha)^{2 + \frac{1}{2 \alpha}} =  \ds\sqrt{ \ln \ln (a^2 np)}$. Moreover, we have 
$$
t_{l_0} \leq \frac{a^2 \ds\sqrt{\mathcal{C}^* \ln L^* }}{n (p-2^{l_0})} \leq \frac{a^2 \ds\sqrt{ \mathcal{C}^* \ln \ln(a^2np) }}{n (p-2^{l_0})} \leq 2 \ds\sqrt{ \mathcal{C}^* } a^4 (\psi_\alpha)^{2 + \frac{1}{2 \alpha}}.
$$
Thus, we have $\mathbb{E}_{a, \Sigma}(\mathcal{A}_{n,p, 2^{l_0}})- t_{l_0} \geq a^4 (\psi_\alpha)^{2+1/(2 \alpha)} (\mathcal{C}^2 - 1 - 4 \ds\sqrt{\mathcal{C}^*})/2$  by our assumption that $ \mathcal{C}^2 > 1 + 4 \ds\sqrt{\mathcal{C}^*} $. 
Therefore we get 
\begin{eqnarray*}
P_{a, \Sigma} ( \Delta_{ad} =0) &=& P_{a, \Sigma}(\forall l \in \{L_*,\ldots, L^*\}\, ; \mathcal{A}_{n,p, 2^l} < t_l) \leq P_{a, \Sigma}( \mathcal{A}_{n,p, 2^{l_0}} < t_{l_0})  \nonumber \\
& \leq &  P_{a, \Sigma}( | \mathcal{A}_{n,p, 2^{l_0}} - \mathbb{E}_{a, \Sigma}(\mathcal{A}_{n,p, 2^{l_0}}) | >  \mathbb{E}_{a, \Sigma}(\mathcal{A}_{n,p, 2^{l_0}})- t_{l_0}). 
\end{eqnarray*}
Now, we use Markov inequality and get :
\begin{eqnarray}
P_{a, \Sigma} ( \Delta_{ad} =0)  & \leq & \ds\frac{\Var_{a, \Sigma}(\mathcal{A}_{n,p, 2^{l_0}})}{( \mathbb{E}_{a, \Sigma}(\mathcal{A}_{n,p, 2^{l_0}})- t_{l_0})^2} \nonumber \\
& \leq & \ds\frac{R_1 + (n-1)(p-2^{l_0})^2R_2}{n(n-1)(p-2^{l_0})^4 ( \mathbb{E}_{a, \Sigma}(\mathcal{A}_{n,p, 2^{l_0}})- t_{l_0})^2}.  \label{erreur de seconde espece} 
\end{eqnarray}
First, we bound from above the first term of \eqref{erreur de seconde espece}, using Proposition~\ref{prop : properties est toeplitz}
\begin{eqnarray*}
S_1 &:=& \ds\frac{R_1}{n(n-1)(p-2^{l_0})^4 ( \mathbb{E}_{a, \Sigma}(\mathcal{A}_{n,p, 2^{l_0}})- t_{l_0})^2}  \nonumber \\ 
&=& \ds\frac{ a^4(1 + o(1)) +  \mathbb{E}_\Sigma(\widehat{\mathcal{A}}_{n, p, 2^{l_0}}) \cdot ( O(a^2\ds\sqrt{2^{l_0}}) + O( a^3(2^{l_0})^{3/2 - 2 \alpha}))  }{n(n-1)(p-2^{l_0})^2 ( \mathbb{E}_{a, \Sigma}(\mathcal{A}_{n,p, 2^{l_0}})- t_{l_0})^2}  \nonumber \\
&+ & \ds\frac{ \mathbb{E}^2_\Sigma(\widehat{\mathcal{A}}_{n, p, 2^{l_0}}) \cdot O(m^2/a) }{n(n-1)(p-2^{l_0})^2 ( \mathbb{E}_{a, \Sigma}(\mathcal{A}_{n,p, 2^{l_0}})- t_{l_0})^2} 
\end{eqnarray*}
We decompose $S_1$  as sum of three terms: the first one 
\begin{eqnarray}
S_{1,1}&:=& \ds\frac{  a^4(1 + o(1)) }{n(n-1)(p-2^{l_0})^2 ( \mathbb{E}_{a, \Sigma}(\mathcal{A}_{n,p, 2^{l_0}})- t_{l_0})^2} \nonumber \\
& \leq & \ds\frac{a^4(1+ o(1))}{n(n-1)(p-2^{l_0})^2  a^8 (\psi_\alpha)^{4 + \frac{1}{\alpha}} \Big( \mathcal{C}^2 -1 - 4 \ds\sqrt{\mathcal{C}^*}  \Big)^2} \nonumber \\
&=& O \Big(  \ds\frac{1}{\ln \ln(a^2np)} \Big) =o(1) \, , \text{ as soon as } a^2np \to + \infty .  \nonumber 
\end{eqnarray}
Now we show that the second term of $S_1$ also tends to 0. Recall that $2^{l_0} \asymp (\psi_\alpha)^{- \frac 1{\alpha}}$, therefore
\begin{eqnarray}
S_{1,2} &:=&  \ds\frac{ O(a^2 \,  2^{l_0} ) + O( a^3(2^{l_0})^{3/2 - 2 \alpha})}{n(n-1)(p-2^{l_0})^2  \mathbb{E}_{a, \Sigma}(\mathcal{A}_{n,p, 2^{l_0}})(1- t_{l_0}/ \mathbb{E}_{a, \Sigma}(\mathcal{A}_{n,p, 2^{l_0}}))^2} \nonumber \\[0.4 cm]
& \leq & \ds\frac{ (O(a^2 \,  2^{l_0}) + O( a^3(2^{l_0})^{3/2 - 2 \alpha}))}{n(n-1)(p-2^{l_0})^2  \mathbb{E}_{a, \Sigma}(\mathcal{A}_{n,p, 2^{l_0}}) \Big( 1- \ds\frac{4 \ds\sqrt{ \mathcal{C}^*}}{\mathcal{C}^2 -1} \Big)^2}  \nonumber \\[0.4 cm]
& \leq & \ds\frac{ O(2^{l_0}) + O( a(2^{l_0})^{3/2 - 2 \alpha})}{n(n-1)(p-2^{l_0})^2  a^2 (\psi_\alpha)^{ 2 + \frac 1{2 \alpha}} \Big(1- \ds\frac{4 \ds\sqrt{ \mathcal{C}^*}}{\mathcal{C}^2 -1} \Big)^2}  \nonumber \\[0.4 cm]
& \leq & \ds\frac{O(\ds\sqrt{2^{l_0}} \cdot (\psi_\alpha)^{ 2 + \frac 1{2 \alpha}} ) + O( (2^{l_0})^{3/2 - 2 \alpha} \cdot (\psi_\alpha)^{ 2 + \frac 1{2 \alpha}} )}{  \ln\ln (a^2np) } =o(1). \nonumber 
\end{eqnarray}
since $2^{l_0} \cdot (\psi_\alpha)^{ 2 + \frac 1{2 \alpha}} \asymp(\psi_\alpha)^{2 - \frac 1{2\alpha}}  =o(1)$   and $  (2^{l_0})^{3/2 - 2 \alpha} \cdot  (\psi_\alpha)^{ 2 + \frac 1{2 \alpha}} \asymp (\psi_\alpha)^{4 - \frac 1{ \alpha}} =o(1) $ for all $\alpha > 1/4$.
Finally,
\begin{eqnarray*}
S_{1,3} &:=&  \ds\frac{ \mathbb{E}^2_\Sigma(\widehat{\mathcal{A}}_{n, p, 2^{l_0}}) \cdot O((2^{l_0})^2) }{n(n-1)(p-2^{l_0})^2 ( \mathbb{E}_{a, \Sigma}(\mathcal{A}_{n,p, 2^{l_0}})- t_{l_0})^2}  \nonumber \\
& =& \ds\frac{O( (2^{l_0})^2)}{ n(n-1) p^2  }=o(1).
\end{eqnarray*}
Now, we bound from above the second term of \eqref{erreur de seconde espece}.
\begin{eqnarray*}
S_2 & =&  \ds\frac{R_2}{n(p-2^{l_0})^2( \mathbb{E}_{a, \Sigma}(\mathcal{A}_{n,p, 2^{l_0}})- t_{l_0})^2} \nonumber \\
&=&  \ds\frac{ a^2 \cdot \mathbb{E}_{\Sigma} (\widehat{\mathcal{A}}_{n, p,2^{l_0}}) \cdot o(1) }{n(p-2^{l_0}) (\mathbb{E}_{a, \Sigma}(\mathcal{A}_{n,p, 2^{l_0}})- t_{l_0})^2}  
+ \frac{ \mathbb{E}^2_{\Sigma}(\widehat{\mathcal{A}}_{n, p, 2^{l_0}}) \cdot O(2^{l_0}) }{n(p-2^{l_0}) (\mathbb{E}_{a, \Sigma}(\mathcal{A}_{n,p, 2^{l_0}})- t_{l_0})^2}  \nonumber \\[0.3 cm]
&+&  \frac{\mathbb{E}^{3/2}_{\Sigma}(\widehat{\mathcal{A}}_{n, p, 2^{l_0}} ) \cdot (O( a\cdot (2^{l_0})^{1/4})+ O(a^2 (2^{l_0})^{3/4- \alpha}))}{n(p-2^{l_0}) (\mathbb{E}_{a, \Sigma}(\mathcal{A}_{n,p, 2^{l_0}})- t_{l_0})^2}.
\end{eqnarray*}
 We bound from above each term  of $S_2$. For the first term,
 \begin{eqnarray}
 S_{2,1} &:=&  \ds\frac{ a^2 \cdot \mathbb{E}_{\Sigma} (\widehat{\mathcal{A}}_{n, p,2^{l_0}}) \cdot o(1) }{n(p-2^{l_0}) (\mathbb{E}_{a, \Sigma}(\mathcal{A}_{n,p, 2^{l_0}})- t_{l_0})^2} 
  \leq \ds\frac{ o(1)}{n(p-2^{l_0})  a^2 (\psi_\alpha)^{ 2 + \frac 1{2 \alpha}} }  \nonumber \\
 & = &  \ds\frac{o(1)}{\ds\sqrt{\ln \ln (a^2 np)}} = o(1). \nonumber 
 \end{eqnarray}
For the second term we have,
\begin{eqnarray*}
S_{2,2} &:=& \frac{ \mathbb{E}^2_{\Sigma}(\widehat{\mathcal{A}}_{n, p, 2^{l_0}}) \cdot O(2^{l_0}) }{n(p-2^{l_0}) (\mathbb{E}_{a, \Sigma}(\mathcal{A}_{n,p, 2^{l_0}})- t_{l_0})^2}  \leq  \ds\frac{ O(2^{l_0}) }{n p } = o(1) 
\end{eqnarray*}
Finally for the last term, 
\begin{eqnarray*}
S_{2,3} &:=& \frac{\mathbb{E}^{3/2}_{\Sigma}(\widehat{\mathcal{A}}_{n, p, 2^{l_0}} ) \cdot (O( a\cdot (2^{l_0})^{1/4})+ O(a^2 (2^{l_0})^{3/4- \alpha}))}{n(p-2^{l_0}) (\mathbb{E}_{a, \Sigma}(\mathcal{A}_{n,p, 2^{l_0}})- t_{l_0})^2} \nonumber \\
& \leq & \frac{O(  (2^{l_0})^{1/4})+ O(a \cdot  (2^{l_0})^{3/4- \alpha})}{n(p-2^{l_0}) a \psi_\alpha^{1 +  \frac{1}{4\alpha} } } \nonumber \\
& \leq &   \frac{O(  (2^{l_0})^{1/4})}{ \ds\sqrt{n(p-2^{l_0})} \,  (\ln \ln(a^2 np))^{\frac 14} } +  \frac{ O(a^2  \cdot \psi_\alpha^{1 +\frac 1{4 \alpha}} \cdot  (2^{l_0})^{3/4- \alpha})}{\ds\sqrt{\ln \ln(a^2 np)}} = o(1),
\end{eqnarray*}
as $ a^2  \cdot \psi^{1 +\frac 1{4 \alpha}} \cdot  (2^{l_0})^{3/4- \alpha} \asymp \psi_\alpha^{2 - \frac 1{2 \alpha} } =o(1)$.
\hfill \end{proof}

\bibliographystyle{plain}
\bibliography{biblio123}

\begin{thebibliography}{10}

\bibitem{Bai2009}
Z.~Bai, D.~Jiang, J.-F Yao, and S.~Zheng.
\newblock Corrections to lrt on large-dimensional covariance matrix by rmt.
\newblock {\em The Annals of Statistics}, 37(6B):3822--3840, 12 2009.

\bibitem{ButuceaZgheib2014A}
C.~{Butucea} and R.~{Zgheib}.
\newblock Sharp minimax tests for large covariance matrices.
\newblock {\em ArXiv e-prints}, 2014.

\bibitem{ButuceaZgheib15}
C.~Butucea and R.~Zgheib.
\newblock Sharp minimax tests for large toeplitz covariance matrices with
  repeated observations.
\newblock {\em Journal of Multivariate Analysis}, 2015.

\bibitem{CaiMa13}
T.~T. Cai and Z.~Ma.
\newblock Optimal hypothesis testing for high dimensional covariance matrices.
\newblock {\em Bernoulli}, 19(5B):2359--2388, 11 2013.

\bibitem{ChenZZ10}
S.~X. Chen, L.-X. Zhang, and P.-S. Zhong.
\newblock Tests for high-dimensional covariance matrices.
\newblock {\em J. Amer. Statist. Assoc.}, 105(490):810--819, 2010.

\bibitem{HallHeyde}
P.~Hall and C.~C. Heyde.
\newblock {\em Martingale limit theory and its application}.
\newblock Academic Press, Inc. [Harcourt Brace Jovanovich, Publishers], New
  York-London, 1980.
\newblock Probability and Mathematical Statistics.

\bibitem{IngsterSuslina03}
Yu.~I. Ingster and I.~A. Suslina.
\newblock {\em Nonparametric goodness-of-fit testing under {G}aussian models},
  volume 169 of {\em Lecture Notes in Statistics}.
\newblock Springer-Verlag, New York, 2003.

\bibitem{JiangJiangYang12}
D.~Jiang, T.~Jiang, and F.~Yang.
\newblock Likelihood ratio tests for covariance matrices of high-dimensional
  normal distributions.
\newblock {\em J. Statist. Plann. Inference}, 142(8):2241--2256, 2012.

\bibitem{JurczakRohde15}
K.~{Jurczak} and A.~{Rohde}.
\newblock {Spectral analysis of high-dimensional sample covariance matrices
  with missing observations}.
\newblock {\em ArXiv e-prints}, July 2015.

\bibitem{LedoitWolf02}
O.~Ledoit and M.~Wolf.
\newblock Some hypothesis tests for the covariance matrix when the dimension is
  large compared to the sample size.
\newblock {\em Ann. Statist.}, 30(4):1081--1102, 2002.

\bibitem{Lounici14}
K.~Lounici.
\newblock High-dimensional covariance matrix estimation with missing
  observations.
\newblock {\em Bernoulli}, 20(3):1029--1058, 2014.

\bibitem{Schneider01}
T.~Schneider.
\newblock Analysis of incomplete climate data: Estimation of mean values and
  covariance matrices and imputation of missing values.
\newblock {\em J. Climate}, 14:853--871, 2001.

\bibitem{Srivastava2005}
M.~S. Srivastava.
\newblock Some tests concerning the covariance matrix in high dimensional data.
\newblock {\em J. Japan Statist. Soc.}, 35(2):251--272, 2005.

\bibitem{Tsybakov09}
Alexandre~B. Tsybakov.
\newblock {\em Introduction to nonparametric estimation}.
\newblock Springer Series in Statistics. Springer, New York, 2009.
\newblock Revised and extended from the 2004 French original, Translated by
  Vladimir Zaiats.

\bibitem{WangCaoMiao13}
C.~{Wang}, L.~{Cao}, and B.~{Miao}.
\newblock {Asymptotic power of likelihood ratio tests for high dimensional
  data}.
\newblock {\em ArXiv e-prints}, feb 2013.

\end{thebibliography}

\end{document}